\documentclass[paper=a4,parskip=half,DIV11,onecolumn,notitlepage]{scrartcl}

\usepackage[english]{babel}																	
\usepackage[utf8]{inputenc}																
\usepackage[T1]{fontenc}																		
\usepackage{graphics}																
\usepackage{color}																					
\usepackage[babel,english=american]{csquotes}											
\usepackage{typearea}													
\usepackage{setspace}																				
\usepackage{amsmath, amstext}																
\usepackage[autooneside]{scrpage2}													
\usepackage{enumerate}							 
\usepackage{authblk}																				
\usepackage{listings}																				
\usepackage[																								
margin=5pt,labelfont=bf,parskip=5pt]{caption}								
\usepackage{colortbl}
\usepackage{booktabs}

\usepackage[charter]{mathdesign}														
\usepackage[scaled]{berasans}																
\usepackage[scaled]{beramono}																
\usepackage{microtype}



\pagestyle{plain}																			
\addtokomafont{sectioning}{\rmfamily}


\usepackage{pgf,tikz}
\usetikzlibrary{arrows}
\setlength{\unitlength}{1mm}

\usepackage{nndoubleletters}

\usepackage[hyperref,thmmarks,amsmath,thref]{ntheorem}

\theoremstyle{break}
\theoremnumbering{arabic}
\theorembodyfont{\normalfont}
\newtheorem{thm}{Theorem}
\newtheorem{lem}[thm]{Lemma}
\newtheorem{cor}[thm]{Corollary}
\newtheorem{prop}[thm]{Proposition}

\theorembodyfont{\normalfont}

\theoremstyle{nonumberplain}
\theoremsymbol{\ensuremath{\square}}
\newtheorem{proof}{Proof.}
\newtheorem{proof2}{Proof}
\theoremsymbol{}

\usepackage{hyperref}

\allowdisplaybreaks[4]

\renewcommand{\epsilon}{\varepsilon}
\renewcommand{\phi}{\varphi}

\newcommand{\con}[1]{\hspace{0.2em}{\nwarrow\hspace{-1em}\nearrow}_{#1}}

\renewcommand{\phi}{\varphi}
\renewcommand{\epsilon}{\varepsilon}
\newcommand{\calC}{\mathcal{C}}
\newcommand{\calL}{\mathcal{L}}

\newcommand{\D}{\Delta}

\newcommand{\scalefactor}{0.65}

\hypersetup{
pdfborder={0 0 0},
pdfpagemode=UseOutlines,
bookmarksnumbered=true,
pdftitle=Longest Paths in Partial 2-Trees,
pdfauthor={Julia Ehrenmüller, Cristina G. Fernandes, Carl Georg Heise},
pdfproducer=Miktex 2.9,
}

\title{Nonempty Intersection of Longest Paths \\ in Series-Parallel Graphs}

\newcounter{f1}
\setcounter{f1}{1}
\newcounter{f2}
\setcounter{f2}{2}
\newcounter{f3}
\setcounter{f3}{3}
\author[1,\fnsymbol{f1}]{Julia~Ehrenmüller}
\author[2,\fnsymbol{f3}]{Cristina~G.~Fernandes}
\author[1,\fnsymbol{f1},\fnsymbol{f2}]{Carl~Georg~Heise}
\affil[1]{\footnotesize Institut für Mathematik, Technische Universität Hamburg-Harburg, Germany, \{julia.ehrenmueller,carl.georg.heise\}@tuhh.de}
\affil[2]{\footnotesize Institute of Mathematics and Statistics, University of São Paulo, Brazil, cris@ime.usp.br}

\date{\today\\}

\begin{document}
\maketitle
\thispagestyle{empty}

\let\oldthefootnote\thefootnote
\renewcommand{\thefootnote}{\fnsymbol{footnote}}
\footnotetext[1]{The authors gratefully acknowledge the support of the Technische Universität München Graduate School's Thematic
Graduate Center TopMath.}
\footnotetext[2]{Partially supported by DFG grant GR 993/10-1.}
\footnotetext[3]{Partially supported by CNPq Proc.~308523/2012-1 and 477203/2012-4, FAPESP 2013/03447-6, and 
a joint CAPES-DAAD project (415/PPP-Probral/po/D08/11629,  Proj.~no.~333/09).}
\let\thefootnote\oldthefootnote

\begin{abstract}\noindent
In 1966 Gallai asked whether all longest paths in a connected graph have nonempty intersection.  This is not true in general and various
counterexamples have been found.  However, the answer to Gallai's question is positive for several well-known classes of graphs, as for instance
connected outerplanar graphs, connected split graphs, and 2-trees.  A graph is series-parallel if it does not contain $K_4$ as a minor.
Series-parallel graphs are also known as partial 2-trees, which are arbitrary subgraphs of 2-trees.  We present a proof that every connected
series-parallel graph has a vertex that is common to all of its longest paths.  Since 2-trees are maximal series-parallel graphs, and outerplanar
graphs are also series-parallel, our result captures these two classes in one proof and strengthens them to a larger class of graphs.  We also describe how this vertex can be found in linear time.
\end{abstract}

\section{Introduction}
\label{sec:intro}

A path in a graph is a \emph{longest path} if there exists no other path in the same graph that is strictly longer. The study of intersections of longest paths has a long history and, in particular, the question of whether every connected graph has a vertex that is common to all of its longest paths was raised by Gallai~\cite{Gallai68} in 1966. For some years it was not clear whether the answer is positive or negative until, finally, Walther~\cite{Walther69} found a graph on 25 vertices that answers Gallai's question negatively.

Today, the smallest known graph answering Gallai's question negatively is a graph on 12 vertices, found by Walther and
Voss~\cite{Walther74}, and independently by Zamfirescu~\cite{Zamfirescu76} (see Figure~\ref{fig:wzamf}). To see that the
depicted graph does not have a vertex common to all longest paths, one can identify the three leaves to obtain the
Petersen graph, which is hypohamiltonian, meaning that it does not have a Hamiltonian cycle but every vertex-deleted
subgraph is Hamiltonian.  Note that the length of a longest path in the depicted graph can be at most 10 since at most
two of its three leaves can be contained in a longest path.  But any path of length 10 in the depicted graph would
correspond to a Hamiltonian cycle in the Petersen graph.  It follows first that the length of a longest path is at
most~9 (and it is exactly~9) and second that the intersection of all longest paths is empty.

\begin{figure}[htbp]
	\centering
	\includegraphics[scale=\scalefactor,page=1]{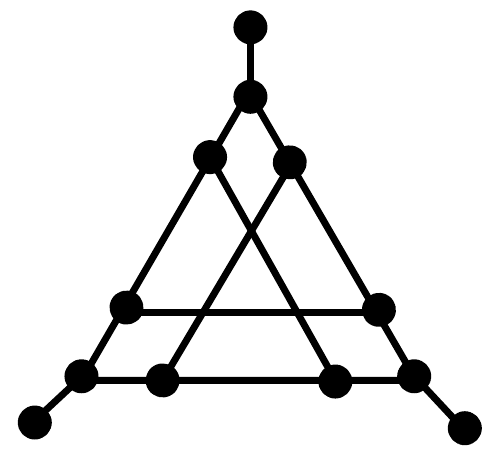}
	\caption{The counterexample of Walther, Voss, and Zamfirescu.}		
	\label{fig:wzamf}
\end{figure}

These are by far not the only counterexamples. In fact, there are infinitely many (even planar) ones since every hypotraceable graph, meaning a graph having no Hamiltonian path whose all vertex-deleted subgraphs have a Hamiltonian path, is obviously a counterexample. Thomassen proved in~\cite{Thomassen76} that there are infinitely many such graphs.

Since the answer to Gallai's question is negative in general, it seems natural to restrict the problem to subsets of a fixed size of all longest paths. It is well-known~\cite{ore} that any two longest paths of a connected graph share a common vertex. However, considering the intersection of more than two longest paths
gets more intriguing. It is still unknown whether any three longest paths of every connected graph share a common vertex. Zamfirescu asked this question several times~\cite{Voss91, Zamfirescu01} and it was mentioned at the 15th British Combinatorial Conference~\cite{BCC95}. It is presented as a conjecture in~\cite{HarrisHM2008} and as an open problem in the list collected by West~\cite{West}. Progress in this direction was made by de~Rezende, Martin, Wakabayashi, and the second author~\cite{deRezendeFMW13}, who proved that, if all non-trivial blocks of a connected graph are Hamiltonian, then any three longest paths of the graph share a vertex.
Skupie\'{n}~\cite{Skupien96} showed that for every \(p \geq 7\), there exists a connected graph such that~\(p\) longest paths have no common vertex and every~\(p-1\) longest paths have a common vertex. 

Even though it seems as if the property of having a vertex common to all longest paths is too strong, there are some classes of connected graphs for which this property holds. A simple example is the class of trees since in a tree all longest paths contain its center(s). Moreover, Klav{\v{z}}ar and Petkov{\v{s}}ek~\cite{KlavzarP90} proved that the intersection of all longest paths of a (connected) split graph is nonempty. Furthermore, they showed in~\cite{KlavzarP90} that, if every block of a connected graph \(G\) is Hamilton-connected, almost Hamilton-connected, or a cycle, then there exists a vertex common to all its longest paths. The latter result implies immediately that the answer to Gallai's question is positive for the class of (connected) cacti, where a graph is a cactus if and only if every block is either a simple cycle or a vertex or a single edge. 

In 2013, de Rezende et al.~proved the following two theorems.

\begin{thm}[\cite{deRezendeFMW13}]
\label{thm:outerplanar}
For every connected outerplanar graph, there exists a vertex common to all its longest paths.
\end{thm}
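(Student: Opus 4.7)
The plan is to proceed by reducing to the 2-connected case via the block-cut-vertex tree of $G$. The 2-connected case is essentially trivial: every 2-connected outerplanar graph $H$ admits a Hamiltonian cycle (its outer face), so every longest path of $H$ is Hamiltonian and visits every vertex; any vertex of $H$ is then common to all of its longest paths. Henceforth assume $G$ has at least one cut vertex.

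Let $T$ denote the block-cut-vertex tree of $G$: its nodes are the blocks and the cut vertices of $G$, with a block-node $B$ adjacent to a cut-vertex-node $c$ iff $c \in V(B)$. Each path $P$ of $G$ determines a subtree $\pi(P)$ of $T$, namely the minimal subtree containing every block-node $B$ with $E(P) \cap E(B) \neq \emptyset$. Since any two longest paths of a connected graph share a vertex (a classical fact noted already in the introduction), any two such subtrees share a node of $T$. The Helly property for subtrees of a tree then yields a common node $x \in T$ lying in $\pi(P)$ for every longest path $P$ of $G$. If $x$ is a cut-vertex-node, we are done: that cut vertex of $G$ lies on every longest path.

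In the remaining case, $x$ is a block-node corresponding to some block $B^*$. Each longest path of $G$ then has at least one edge in $B^*$, and its intersection with $B^*$ is a connected subpath. To complete the argument I would assign to each cut vertex $c$ of $G$ lying in $V(B^*)$ a weight $w(c)$ equal to the maximum length of a path in $G$ starting at $c$ that avoids $E(B^*)$ (and set $w(v)=0$ for the remaining vertices of $B^*$). Then the longest paths of $G$ correspond to $a$-$b$ subpaths $P'$ of $B^*$ that maximize $|P'|+w(a)+w(b)$, subject to a mild simplicity caveat when $a=b$. Since $B^*$ is 2-connected outerplanar, it is Hamiltonian, and the task reduces to locating a vertex of $B^*$ lying on every such weight-optimal $a$-$b$ path.

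I expect this last step to be the main obstacle. Without the weights, Hamiltonicity of $B^*$ trivially gives a common vertex (every vertex works), but the weights concentrate the optimal endpoints on a small subset of the cut vertices of $G$ in $B^*$ and the optimal path structure becomes subtle. My plan is to exploit the weak dual tree of $B^*$, whose nodes are the inner faces of $B^*$ and whose edges connect faces sharing a chord, and to select a \emph{median} face or chord relative to the distribution of the weights $w(c)$, showing by a careful case analysis that every weight-optimal $P'$ must traverse it and thereby pinpointing a common vertex on this median structure.
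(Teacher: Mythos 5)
Your reduction is set up sensibly, but the proof is not complete: the step you yourself flag as ``the main obstacle'' is in fact where essentially all of the difficulty of this theorem lives, and you only describe an intended strategy (``select a median face or chord \dots showing by a careful case analysis'') without carrying it out. Once you have reduced to a single 2-connected outerplanar block $B^*$ with vertex weights $w(c)$ recording the longest pendant paths at its cut vertices, the problem of showing that all maximizers of $|P'|+w(a)+w(b)$ share a vertex is not made easy by Hamiltonicity of $B^*$: the weights can force the optimal paths to be short and to avoid large portions of the Hamiltonian cycle (already on a plain cycle with two heavy antipodal vertices the optimal paths are the two short arcs between them), so ``every vertex works'' fails badly and a genuine structural argument on the weak dual is needed. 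Until that argument is written, you have reformulated the theorem rather than proved it. There is also a smaller, repairable slip earlier: as defined, $\pi(P)$ is generated only by the blocks containing an edge of $P$, so two longest paths that meet only in a cut vertex $v$, with their incident edges lying in different blocks at $v$, can have disjoint subtrees $\pi(P)$ and $\pi(Q)$, and the Helly argument does not apply as stated. You should enlarge $\pi(P)$ to also contain the cut-vertex nodes of all cut vertices lying on $P$; with that change the pairwise-intersection claim and the rest of the Helly step go through.

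For comparison, the paper does not prove this statement by a block/weak-dual analysis at all: it cites it from the earlier work and then recovers it as a corollary of the main theorem for connected series-parallel graphs, whose proof embeds the graph in a 2-tree and locates first a virtual Gallai triangle, then a virtual Gallai edge together with a distinguished component, and finally iterates to a single Gallai vertex (Lemmas~\ref{lem:triang}, \ref{lem:edge}, and~\ref{lem:iter2}). Your block-decomposition route is legitimately different and, if the weighted 2-connected case were settled, would give a self-contained proof specific to outerplanar graphs; but as it stands the core lemma is missing.
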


\begin{thm}[\cite{deRezendeFMW13}]
\label{thm:2-trees}
For every 2-tree, there exists a vertex common to all its longest paths.
\end{thm}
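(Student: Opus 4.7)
The plan is to prove Theorem~\ref{thm:2-trees} by induction on the number of vertices of the 2-tree $T$, exploiting the recursive construction of 2-trees: every 2-tree on $n \ge 4$ vertices admits a simplicial vertex $v$ of degree~$2$ whose two neighbors $x$ and $y$ are adjacent, and $T' := T - v$ is again a 2-tree. The base case $n = 3$ is immediate, since every longest path in $K_3$ has length~$2$ and covers all three vertices. For the inductive step, the induction hypothesis yields a vertex $w$ lying on every longest path of $T'$, and the task is to show that $w$ also lies on every longest path of~$T$.

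Let $\ell(G)$ denote the length of a longest path of a graph~$G$. Because $v$ has degree~$2$, any path of $T$ that passes through~$v$ can be converted into a path of $T'$ of length exactly one less by removing $v$ if it is an endpoint, or by replacing the internal subpath $x$-$v$-$y$ with the edge $xy$ (which exists by simpliciality of $v$) otherwise. Combined with the obvious fact that every path of $T'$ is also a path of~$T$, this gives $\ell(T) \in \{\ell(T'),\,\ell(T') + 1\}$. If $\ell(T) = \ell(T') + 1$, every longest path $P$ of~$T$ must use~$v$, and the transformation just described turns $P$ into a path of $T'$ of length $\ell(T')$ on the vertex set $V(P) \setminus \{v\}$. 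This is a longest path of $T'$ and by the induction hypothesis contains~$w$; since $w \ne v$, we conclude $w \in V(P)$, so the inductive step goes through in this regime.

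The delicate case is $\ell(T) = \ell(T')$: longest paths of $T$ that avoid $v$ are again longest paths of $T'$ and contain~$w$, but longest paths of $T$ through~$v$ produce, under the same transformation, paths of $T'$ of length only $\ell(T') - 1$ which need not be maximal and thus need not contain~$w$. This is the main obstacle of the proof. I expect resolving it to require either a more careful choice of the simplicial vertex $v$---for instance, one taken from a leaf-triangle on the centroid side of the triangle-tree of $T$, or chosen so that no longest path of $T$ uses~$v$ in a way that destroys the inductive structure---or a strengthening of the induction hypothesis asserting that $w$ lies on every $T'$-path of length at least $\ell(T') - 1$ with an endpoint in a prescribed edge. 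Designing this strengthened invariant and verifying that it is preserved under the removal of a simplicial vertex should form the bulk of the remaining argument.
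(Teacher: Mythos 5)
Your proposal is not yet a proof: it sets up the natural induction on a simplicial vertex $v$, disposes of the base case and of the easy regime $\ell(T)=\ell(T')+1$, and then explicitly leaves open the regime $\ell(T)=\ell(T')$, which is where the entire difficulty lives. In that regime a longest path of $T$ through $v$ projects to a path of $T'$ of length $\ell(T')-1$, about which the induction hypothesis says nothing, so the Gallai vertex $w$ of $T'$ need not meet it. Neither of the repairs you gesture at is substantiated. The delicate case cannot be dodged by choosing $v$ cleverly: in the 2-tree obtained from a triangle $abc$ by attaching two simplicial vertices to each of its three edges, every simplicial vertex lies on some but not all longest paths and its deletion leaves the longest-path length unchanged, so every admissible choice of $v$ lands you in the unresolved case. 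And the strengthened invariant you float (that $w$ meets every $T'$-path of length at least $\ell(T')-1$ with an endpoint in a prescribed edge) is exactly the hard content of the theorem in disguise; you give no reason to believe it holds, let alone that it propagates through the deletion of a simplicial vertex. By your own account the bulk of the argument is missing, so this is an outline of an approach rather than a proof.

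For comparison, neither the cited proof nor this paper argues by vertex-deletion induction. Here Theorem~\ref{thm:2-trees} falls out of the main theorem on series-parallel graphs, whose proof runs on a different axis: one first shows that some triangle of the underlying 2-tree meets every longest path (Lemma~\ref{lem:triang}), using only the pairwise intersection of longest paths (Proposition~\ref{prop:ore}) and the bridge structure of 2-trees; one then refines this to a Gallai edge together with a component in whose interior certain families of longest paths pairwise intersect (Lemmas~\ref{lem:triang3} and~\ref{lem:edge}); and finally one iterates, strictly shrinking that component at each step (Lemma~\ref{lem:iter2}) until a single Gallai vertex remains. The quantity that decreases is a bridge of the 2-tree carrying a pairwise-intersection invariant about longest paths, not the vertex count, and that invariant is precisely the kind of strengthened hypothesis your sketch recognizes is needed but does not supply. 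If you want to salvage your approach, the place to look is for an invariant of this intersection-theoretic type rather than a cleverer choice of simplicial vertex.
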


Theorem~\ref{thm:outerplanar} is a strengthening of a theorem by Axenovich~\cite{Axenovich09}, which states that any three longest paths in a connected outerplanar graph share a vertex. 

In this paper we treat the general case of nonempty intersection of all longest paths and prove that the answer to Gallai's question is positive for the class of connected series-parallel graphs settling a question raised in~\cite{deRezendeFMW13}. Note that a joint paper presenting two different proofs for this statement is to appear in \cite{us}. Since not only all trees and cacti but also outerplanar graphs and 2-trees are series-parallel, our result gives a unified proof for Theorems~\ref{thm:outerplanar} and~\ref{thm:2-trees} and generalizes them to a larger class of graphs. 

The rest of the paper is organized as follows. In Section~\ref{sec:definitions} we give essential definitions and prove some statements that will be useful in what follows. In Section~\ref{sec:series-parallel} we prove the main theorem by proceeding in three steps. First we fix a 2-tree that has the given series-parallel graph as a spanning subgraph. The 2-tree captures the structure of the given graph and guides us in the proof. The main obstacle is that we are not sure of which edges of this 2-tree truly exist in the given graph, so the techniques used for 2-trees in~\cite{deRezendeFMW13} fail. Therefore, we are somehow obliged to work with edges that exist in the 2-tree and may or may not exist in the given series-parallel graph. Roughly, the only things that we can rely on are the facts that our graph is connected, that two longest paths intersect, and on the special structure of the so-called components of the series-parallel graph, inherited from the 2-tree. In Section~\ref{sec:algo} we show that finding a vertex contained in all longest paths can be done in quadratic time for series-parallel graphs. Finally, in Section~\ref{sec:question} we state several open problems concerning the intersection of longest paths in specific classes of graphs.

\section{Preliminaries and definitions}
\label{sec:definitions}

We start with a few basic definitions which we use in the subsequent part of our paper. 
All graphs in this paper are undirected and finite. We write \(H\subseteq G\) if the graph \(H\) is a subgraph of the graph \(G\). Also, we denote by \(V(G)\) the set of vertices of \(G\).

Let $G$ be a graph and $s$ and $t$ be two of its vertices. We say $G$ is \emph{series-parallel with terminals $s$ and~$t$} if it can be turned into~$K_2$ by a sequence of the following operations: replacement of a pair of parallel edges with a single edge that connects their common endpoints, or replacement of a pair of edges incident to a vertex of degree 2 other than~$s$ or~$t$ with a single edge. A graph~$G$ is \emph{2-terminal series-parallel} if there are vertices $s$ and~$t$ in~$G$ such that~$G$ is series-parallel with terminals~$s$ and~$t$. A graph $G$ is \emph{series-parallel} if each of its 2-connected components is a 2-terminal series-parallel graph. (See~\cite[Sec.~11.2]{BrandstadtLS99}.)

A \emph{2-tree} can be defined in the following way. A single edge is a 2-tree. If \(T\) is not a single edge, then \(T\) is a 2-tree if and only if there exists a vertex \(v\) of degree~2 such that its neighbors are adjacent and~\(T-v\) is also a 2-tree.  A graph is a \emph{partial 2-tree} if it is a subgraph of a 2-tree. (See~\cite[Sec.~11.1]{BrandstadtLS99}.)  We say a partial 2-tree is \emph{trivial} if it consists of a single vertex or a single edge. Note that every edge in a non-trivial 2-tree is contained in a triangle.

It is well-known that a graph is a partial 2-tree if and only if it is \(K_4\)-minor free. 
Partial 2-trees are exactly the series-parallel graphs,
and are also known for being the graphs with tree width at most~2. (See~\cite[Sec.~11.1]{BrandstadtLS99}.) 

Next we present some notation we use in our proofs.

The \emph{length} of a path \(P\), denoted by \(|P|\), is the number of edges in \(P\). Let \(L(G)\) denote the length of a longest path in the graph \(G\). Let \(\mathcal L(G)\) denote the set of all longest paths in \(G\), that is, \(\mathcal L(G) = \{P \ | \) $P$ is a path in $G$ and $|P| = L(G)\}$. If the graph $G$ is clear from the context, we simply write $L$ for \(L(G)\) and $\mathcal L$ for $\mathcal L(G)$. 

By the intersection \(P \cap P'\) of two paths \(P\) and \(P'\), we mean the intersection of their vertex sets. If~\(v\) is a vertex of the path \(P\), we write \(v \in P\). If \(P\) and \(P'\) have a common endpoint \(x\) but no other common vertex, then the \emph{union} \(P \cup P'\) is simply defined as the path obtained by concatenating the path \(P\) and the path \(P'\) at the vertex~\(x\).

A subpath of a path $P$ is called a \emph{tail} of $P$ if it contains an endpoint of $P$. Given a vertex \(x\) in \(P\), the path \(P\) can be split into two subpaths~\(P'\) and~\(P''\) such that \(P' \cap P'' = \{x\}\); we call them \emph{tails of \(P\) starting at \(x\)}. If \(|P'| \geq |P''|\), then~\(P'\) is called a \emph{longer tail} of \(P\) starting at \(x\).

Given a second path \(Q\) such that \(Q \cap P \neq \varnothing\) and such that at least one endpoint \(x\) of \(P\) is not contained in \(Q\), we define the \emph{bridge path} \(P\con{x} Q\) as the path starting at the endpoint \(x\) going along \(P\) until the first intersection with \(Q\). 
 
For some subgraph \(H\), we define \(P[H]\) to be the induced subgraph of \(P\) in \(H\), that is, the collection of (maximal) subpaths of~\(P\) that lie in~\(H\). Note that this might be more than one path. 

In the next, we borrow some definitions and results presented by Tutte~\cite{Tutte01}. 
Let $G$ be a graph and $H$ be a subgraph of $G$. A \emph{vertex of attachment} of $H$ in $G$ is a vertex of $H$ that is incident to some edge of $G$ that is not an edge of $H$. Let \(T\) be a 2-tree and \(\{x,y\} \in E(T)\). 
An \emph{$\{x,y\}$-bridge} in $T$ is a minimal subgraph $B$ of $T$ containing a vertex other than $x$ and $y$ and whose vertices of attachment are contained in $\{x,y\}$. 
For each common neighbor \(z\) of \(x\) and \(y\), let~$B_{\{x,y\},z}(T)$ be the $\{x,y\}$-bridge in $T$ containing~$z$. 
There is a unique such bridge because, by Theorem~I.51~\cite{Tutte01}, the intersection of two distinct $\{x,y\}$-bridges in~$T$ lies in $\{x,y\}$. The \emph{interior} of the $B_{\{x,y\},z}(T)$ is the graph $B^\circ_{\{x,y\},z}(T) = B_{\{x,y\},z}(T) - x - y$. 

In what follows, for a series-parallel graph \(G=(V,E)\),  we let \(T(G)=(V,F)\) denote an arbitrary but fixed 2-tree that contains \(G\) as a spanning subgraph. We say a \emph{virtual edge/triangle} of \(G\) is an edge/triangle in \(T(G)\) independent of its existence in~\(G\). 

We denote by \(C_{\{x,y\},z}(G)\) the maximal subgraph of~\(G\) contained in \(B_{\{x,y\},z}(T(G))\). 
Note that such a subgraph of \(G\) may be disconnected. We call this subgraph the \emph{component} of $G$ generated by the virtual edge $\{x,y\}$ in direction $z$. Similarly, \(C^\circ_{\{x,y\},z}(G) = C_{\{x,y\},z}(G) - x - y\) is called the \emph{interior} of the component \(C_{\{x,y\},z}(G)\). Again, if the graph is clear from the context, we write~\(C_{\{x,y\},z} = C_{\{x,y\},z}(G)\). 

Also, let \(\mathcal C_{\{x,y\}}(G) = \{C_{\{x,y\},z}(G): z\) is adjacent to $x$ and $y$ in $T(G)\}$ be the set of all components generated by the virtual edge $\{x,y\}$. Further, define \(\mathcal C_{\{x,y\}|z}(G)=\mathcal C_{\{x,y\}}(G)\backslash\{C_{\{x,y\},z}(G)\}\) for a virtual triangle~\(\{x,y,z\}\).

A set of vertices \(W\) is called a \emph{Gallai set} (for \(G\)) if \(W \cap P\neq\varnothing\) for all \(P\in \mathcal L(G)\). If \(W\) is a Gallai set and if its vertices are pairwise connected by virtual edges, we call this set a \emph{virtual Gallai edge} and a \emph{virtual Gallai triangle}, when \(W\) has size two or three, respectively. 

We say a vertex \(v \in V\) is a \emph{Gallai vertex} if \(\{v\}\) is a Gallai set. Note that the intersection of all longest paths of a graph \(G\) is nonempty if and only if \(G\) has a Gallai vertex.

\newcommand{\Le}[2]{\calL_{{#1}{#2}}} 
\newcommand{\LeB}[2]{\calL_{{#1}\overline{#2}}}
\newcommand{\Luvw}[3]{\calL_{{#1}{#2}{#3}}}
\newcommand{\Lo}[3]{\calL_{({#1}{#2}{#3})}}
\newcommand{\LB}[3]{\calL_{{#1}{#2}\overline{#3}}}
\newcommand{\LBB}[3]{\calL_{{#1}\overline{#2}\overline{#3}}}

For a given virtual edge \(\{u,v\}\), let \(\Le{u}v = \{P \in \mathcal L \, | \, u, v \in P\}\) and \(\LeB{u}v = \{P \in \mathcal L \; | \; u \in P, \; v \notin P\}\). Similarly, for a given virtual triangle \(\D=\{u, v, w\}\), we define $\Luvw{u}vw = \{P \in \calL \; | \; u, v, w \in P\}$, $\LB{u}vw = \{P \in \calL \; | \; u, v \in P, \; w \not\in P\}$, and $\LBB{u}vw = \{P \in \calL \; | \; u \in P, \; v, w \not\in P\}$. Moreover, let~$\Lo{u}vw = \{P \in \Luvw{u}vw \; | \; v $ is between $u$ and $w$ in $P\}$.

We end this section with the following auxiliary results that will be useful in Section~\ref{sec:series-parallel}. Note that the first four results hold for general graphs, not only for series-parallel graphs.

\begin{prop}[\cite{ore}]\label{prop:ore}
Any two longest paths in a connected graph share a common vertex.
\end{prop}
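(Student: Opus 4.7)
The plan is to argue by contradiction: suppose $P$ and $P'$ are two longest paths in a connected graph $G$ with $P \cap P' = \varnothing$. I will use connectivity to splice them into a strictly longer path, contradicting the maximality of $L$.

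First I would invoke connectivity to obtain a path $Q$ in $G$ from some vertex $u \in P$ to some vertex $v \in P'$ whose internal vertices lie entirely outside $V(P) \cup V(P')$; such a $Q$ exists because one can take, among all $P$-$P'$ paths in $G$, one of minimum length, and then minimality forces the interior to avoid $V(P) \cup V(P')$. In particular $|Q| \geq 1$. Then I would split $P$ into its two tails at $u$ and $P'$ into its two tails at $v$, and let $P_u$ and $P'_v$ denote \emph{longer} tails (in the sense defined in Section~\ref{sec:definitions}) of $P$ at $u$ and of $P'$ at $v$, respectively.

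Since $P_u$ is a longer tail of a path of length $L$, we have $|P_u| \geq \lceil L/2 \rceil$, and similarly $|P'_v| \geq \lceil L/2 \rceil$. Now the concatenation $R := P_u \cup Q \cup P'_v$ is a walk from the far endpoint of $P_u$ to the far endpoint of $P'_v$. Because $V(P)$ and $V(P')$ are disjoint and the internal vertices of $Q$ lie outside $V(P) \cup V(P')$, no vertex is repeated, so $R$ is actually a path. Its length is
\[
|R| = |P_u| + |Q| + |P'_v| \;\geq\; \left\lceil \tfrac{L}{2}\right\rceil + 1 + \left\lceil \tfrac{L}{2}\right\rceil \;\geq\; L+1,
\]
contradicting the definition of $L = L(G)$. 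Hence $P \cap P' \neq \varnothing$.

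The only subtle step is ensuring that $R$ is a genuine path rather than a walk with repeated vertices; this is why the connecting path $Q$ must be chosen so that its internal vertices avoid $V(P) \cup V(P')$, and this is the one place where the argument is easy to get slightly wrong. Everything else is a direct length count using the longer-tail bound.
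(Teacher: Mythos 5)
Your argument is correct. Note that the paper itself gives no proof of this proposition--it is stated with a citation to the literature--so there is nothing to diverge from; what you have written is the standard folklore proof. All the key points are handled properly: the minimal $P$--$P'$ connector has its interior disjoint from both paths and has length at least $1$ (since the paths are assumed disjoint, its endpoints differ), the two longer tails each have length at least $\lceil L/2\rceil$ because the two tails at a vertex partition the edges of a path, and the concatenation is vertex-disjoint except at the gluing points, hence a genuine path of length at least $L+1$. This is exactly the kind of splicing argument the paper later generalizes in Lemmas~\ref{lem:3paths2} and~\ref{lem:3paths}.
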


\begin{lem}\label{lem:3paths2}
In a graph, let \(P_1\) and \(P_2\) be two paths with tails \(R_1\) and~\(R_2\), respectively (that is, subpaths containing an endpoint of \(P_1\) or
\(P_2\)) such that \(R_1\cap P_{2} =\varnothing\) and \(R_2\cap P_{1} =\varnothing\). If there exists a \emph{connecting path} \(P\) such that
\(\varnothing\neq P\cap P_1\subseteq R_1\) and \(\varnothing\neq P\cap P_2\subseteq R_2\), then \(P_1\) and \(P_2\) cannot both be longest paths.
\end{lem}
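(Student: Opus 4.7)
The plan is to argue by contradiction: assume both $P_1$ and $P_2$ are longest paths, of common length $L$, and then use $P_1$, $P_2$, and the connecting path $P$ to build two ``crossover'' paths whose lengths sum to more than $2L$, so one of them has length strictly greater than $L$.

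First I would produce a clean connecting subpath. Among all pairs $(z_1,z_2)$ with $z_1\in P\cap P_1$ and $z_2\in P\cap P_2$, pick one minimizing the number of edges of $P$ between $z_1$ and $z_2$, and let $Q$ be the corresponding subpath of $P$. Minimality forces every interior vertex of $Q$ to avoid $P_1\cup P_2$, since otherwise one of $z_1,z_2$ could be replaced by that interior vertex. By hypothesis $z_1\in R_1$ and $z_2\in R_2$, and $z_1\ne z_2$ because $R_1\cap P_2=\varnothing$; hence $|Q|\ge 1$.

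Next, for $i=1,2$, split $P_i$ at $z_i$ into two tails: let $T_i^{\mathrm{in}}$ be the tail of $P_i$ at $z_i$ that lies in $R_i$, and $T_i^{\mathrm{out}}$ the other one. Since $T_1^{\mathrm{in}}\subseteq R_1$ and $R_1\cap P_2=\varnothing$, we obtain $T_1^{\mathrm{in}}\cap T_2^{\mathrm{out}}=\varnothing$, and symmetrically $T_2^{\mathrm{in}}\cap T_1^{\mathrm{out}}=\varnothing$. Combined with the fact that the interior of $Q$ misses $P_1\cup P_2$, this ensures that both
\[
W_1 \;=\; T_1^{\mathrm{in}}\cup Q\cup T_2^{\mathrm{out}} \qquad\text{and}\qquad W_2 \;=\; T_1^{\mathrm{out}}\cup Q\cup T_2^{\mathrm{in}}
\]
are genuine paths in the ambient graph.

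Finally, $|W_1|+|W_2|=|P_1|+|P_2|+2|Q|=2L+2|Q|\ge 2L+2$, so at least one of $W_1$, $W_2$ has length strictly larger than $L$, contradicting the maximality of $L$. The only mildly delicate step is the first one, namely extracting $Q$ while checking that its endpoints really land inside the prescribed tails $R_1$ and $R_2$; after that, the argument reduces to tracking disjointness of the four tails, which is forced directly by $R_1\cap P_2=\varnothing$ and $R_2\cap P_1=\varnothing$, so no further structural information about $G$ is needed.
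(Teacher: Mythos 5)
Your proof is correct and takes essentially the same approach as the paper's: pick a subpath of $P$ between $P_1$ and $P_2$ whose interior avoids both paths, form the two crossover paths through it, and observe that their lengths sum to $2L+2|Q|>2L$. The paper's version is merely terser, leaving the minimality argument and the disjointness checks implicit, while your $W_1,W_2$ coincide with its $Q_2,Q_1$.
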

\begin{proof}
Assume for a contradiction that both \(P_1\) and \(P_2\) are longest paths. For \(i\in\{1,2\}\), let~\(R_i'\) denote the other tail of~\(P_i\) so that
\(P_i = R_i \cup R_i'\) and $R_i$ and $R'_i$ intersect at only one vertex.  By assumption, both~\(R_1\) and~\(R_2\) intersect \(P\).  Hence, there exist
vertices \(x\) and~\(y\) such that \(x \in R_1 \cap P\), \(y \in R_2 \cap P\), and the interior of the subpath of~\(P\) starting at~\(x\) and ending
in~\(y\) does not contain vertices in~\(P_1\) or~\(P_2\). Let~\(Q_1\) denote the path obtained from going along \(R_1'\), along \(R_1\) until~\(x\),
along \(P\) until \(y\), and then along \(R_2\) until the endpoint that is not in \(R_2'\). Let \(Q_2\) denote the path obtained from going along
\(R_2'\), along \(R_2\) until \(y\), along \(P\) until~\(x\), and then along~\(R_1\) until the endpoint that is not in \(R_1'\).  Now, as $|Q_1|+|Q_2|
> |P_1|+|P_2|$, we have that \(|Q_1|>|P_1|\) or~\(|Q_2|>|P_2|\), a contradiction.
\end{proof}

\begin{figure}[htbp]
	\centering
		\includegraphics[scale=\scalefactor]{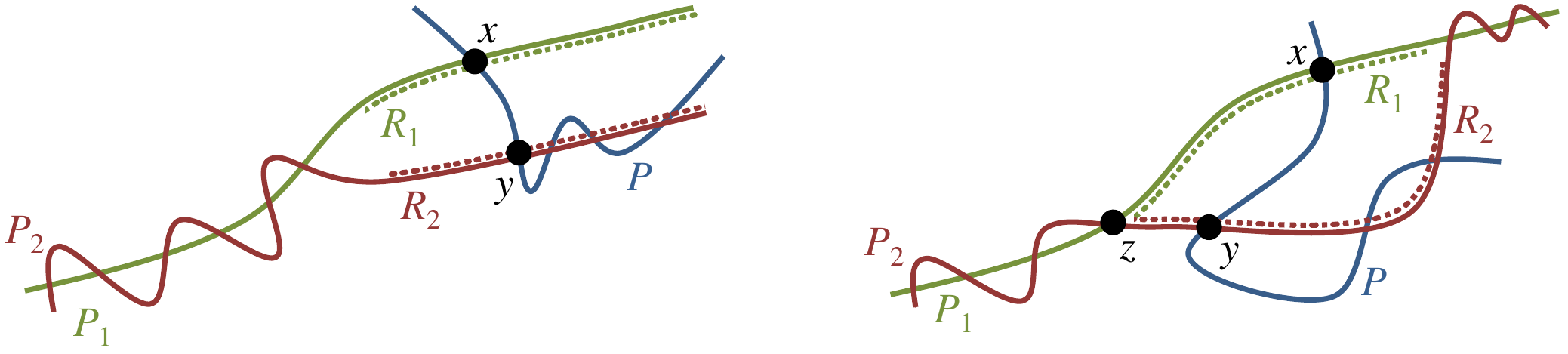}
	\caption{Situation for Lemmas~\ref{lem:3paths2} and~\ref{lem:3paths}.}
	\label{fig:lem34}
\end{figure}

\begin{lem}\label{lem:3paths}
In a graph, let \(P_1\) and \(P_2\) be two paths that share a common vertex \(z\) and let~\(R_1\) and~\(R_2\) be two subpaths of~\(P_1\) and~\(P_2\),
respectively, both having $z$ as an endpoint, such that \(R_1\cap P_2 = \{z\}\) and \(R_2 \cap P_1 = \{z\}\). If there exists a \emph{connecting path}
\(P\) such that \(z\notin P\), \(\varnothing\neq P\cap P_1\subseteq R_1\), and~\(\varnothing\neq P\cap P_2\subseteq R_2\), then \(P_1\) and \(P_2\)
cannot both be longest paths.
\end{lem}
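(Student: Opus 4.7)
The plan is to argue by contradiction, mimicking the swap of Lemma~\ref{lem:3paths2} while using the extra shared vertex $z$ as a second crossing between $P_1$ and $P_2$. Assume both $P_1,P_2\in\calL$. As in the previous lemma, I would first pick $x\in P\cap R_1$ and $y\in P\cap R_2$ such that the subpath $P_{xy}$ of $P$ between them meets $P_1\cup P_2$ only at its endpoints. Since $z\notin P$ we have $x\ne z\ne y$, and since $R_1\cap P_2=\{z\}$ we get $x\in P_1\setminus P_2$, which in particular forces $x\ne y$ and $|P_{xy}|\ge 1$.

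Next I would decompose each $P_i$ at $z$ into its two tails, writing $P_i=T_i\cup T_i'$ with $T_i\cap T_i'=\{z\}$ and $R_i\subseteq T_i$; let $u_i$ be the endpoint of $T_i$ other than $z$ and $v_i$ the endpoint of $T_i'$ other than $z$. The two candidate replacement paths are
\[ Q_1 \;=\; T_1' \cup P_2[z,y] \cup P_{xy} \cup P_1[x,u_1], \qquad Q_2 \;=\; T_2' \cup P_1[z,x] \cup P_{xy} \cup P_2[y,u_2], \]
where $P_i[a,b]$ denotes the subpath of $P_i$ with endpoints $a$ and $b$. Geometrically, $Q_1$ leaves $v_1$, runs along $P_1$ to $z$, crosses to $P_2$ and follows $R_2$ to $y$, jumps across $P_{xy}$ to $x$, and then continues along $P_1$ out to $u_1$; the path $Q_2$ is the symmetric construction. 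A short telescoping count using $|T_i|=|P_i[z,x_i]|+|P_i[x_i,u_i]|$ (with $x_1=x$ and $x_2=y$) then gives
\[ |Q_1|+|Q_2| \;=\; (|T_1|+|T_1'|)+(|T_2|+|T_2'|)+2|P_{xy}| \;=\; |P_1|+|P_2|+2|P_{xy}| \;>\; |P_1|+|P_2|, \]
so as soon as $Q_1$ and $Q_2$ are genuine paths at least one of them contradicts the assumption that $P_1,P_2$ are longest.

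The step I expect to be the main obstacle is verifying that $Q_1$ and $Q_2$ are indeed paths, i.e., that their four concatenated pieces are pairwise internally disjoint. The hypothesis supplies only $R_i\cap P_{3-i}=\{z\}$ rather than the much stronger $P_1\cap P_2=\{z\}$, so each potential collision must first be reduced to a segment that actually lies inside some $R_i$. The useful reductions should be: $P_2[z,y]\subseteq R_2$ and $P_1[z,x]\subseteq R_1$, so each meets the opposite $P_i$ only at $z$; $P_1[x,u_1]\subseteq T_1\setminus\{z\}$ and $P_2[y,u_2]\subseteq T_2\setminus\{z\}$, so they are disjoint from the matching $T_i'$ and from the sub-tails of the opposite path used before $z$; and the interior of $P_{xy}$ lies off $P_1\cup P_2$ by the choice of $x,y$. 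Once these disjointness facts are in place, the length inequality above closes the argument.
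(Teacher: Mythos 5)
Your proof is correct and is essentially the paper's own argument: the two replacement paths $Q_1$ and $Q_2$ are exactly the two paths the paper constructs, and your disjointness checks reduce to the same facts ($P_1[z,x]\subseteq R_1$, $P_2[z,y]\subseteq R_2$, and the interior of $P_{xy}$ avoiding $P_1\cup P_2$). The only cosmetic difference is that you conclude via $|Q_1|+|Q_2|>|P_1|+|P_2|$, whereas the paper compares $|P_1[z,x]|$ with $|P_2[z,y]|$ and exhibits a single longer path in each case; the two bookkeepings are equivalent.
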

\begin{proof}
Assume for a contradiction that both \(P_1\) and \(P_2\) are longest paths.  By assumption, both~\(R_1\) and \(R_2\) intersect \(P\) in a vertex other
than \(z\).  Hence, there exist vertices \(x\) and \(y\) distinct from \(z\) such that \(x \in R_1 \cap P\), \(y \in R_2 \cap P\), and the interior of
the subpath of \(P\) starting at~\(x\) and ending in~\(y\) does not contain vertices in~\(P_1\) or~\(P_2\). Let \(\tilde R_1\) denote the path
starting at~\(z\), going along \(R_1\), and ending in \(x\), and let \(\tilde R_2\) denote the path starting at \(z\), going along \(R_2\), and ending
in \(y\). Let \(R_1'\) and \(R_2'\) denote the tails of~\(P_1\) and~\(P_2\) starting at~\(z\) not containing~\(R_1\) and~\(R_2\), respectively.  If
\(|\tilde R_1| \geq |\tilde R_2|\), then by combining \(R_2'\), \(\tilde R_1\), the subpath of~\(P\) starting at~\(x\) and ending in~\(y\), and the
tail of~\(P_2\) starting at~\(y\) and not containing~\(z\), we get a path strictly longer than~\(P_2\), a contradiction.  If, on the other hand,
\(|\tilde R_1| < |\tilde R_2|\), then by combining \(R_1'\), \(\tilde R_2\), the subpath of~\(P\) starting at~\(y\) and ending in~\(x\), and the tail
of \(P_1\) starting at~\(x\) and not containing \(z\), we get a path strictly longer than~\(P_1\), a contradiction.
\end{proof}

Observe that Lemma~\ref{lem:3paths2} is not a consequence of Lemma~\ref{lem:3paths}. Indeed, in the situation of Lemma~\ref{lem:3paths2}, starting
from $x$ and going along $P_1$, the first common vertex with $P_2$ might not be the same as the first common vertex of $P_2$ with $P_1$, starting
from $y$. Thus, the vertex $z$ as required in Lemma~\ref{lem:3paths} might not exist.

At some points of the proofs in the next section, we are in a situation where one of the two lemmas above apply. 
The next corollary describes this situation.

\begin{figure}[htbp]
	\centering
		\includegraphics[scale=\scalefactor]{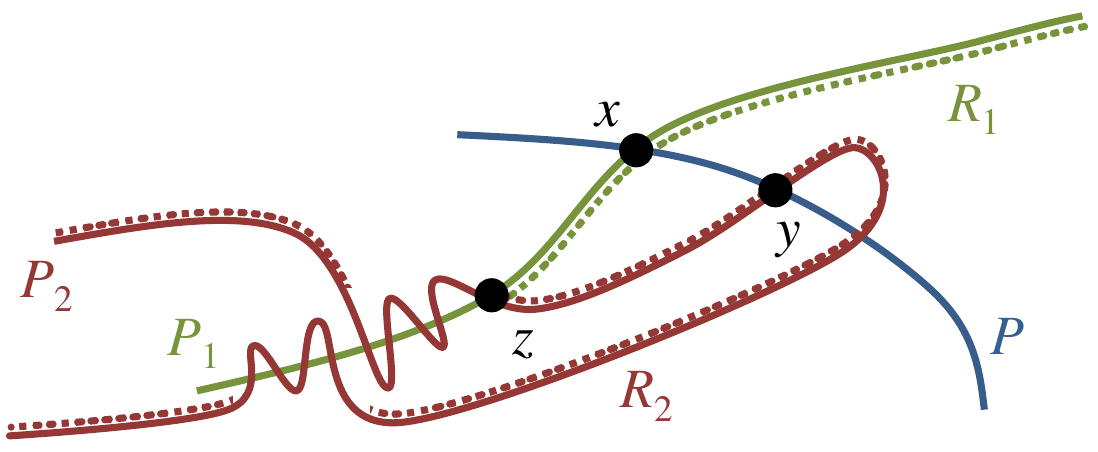}
	\caption{Situation for Corollary~\ref{cor:3paths3}.}
	\label{fig:cor5}
\end{figure}

\begin{cor}\label{cor:3paths3}
In a graph, let \(P_1\) and \(P_2\) be two paths that share a common vertex~\(z\) and let~\(R_1\) be a tail of~\(P_1\) starting at~\(z\). Let~\(R_2\)
be a union of pairwise internally vertex disjoint subpaths of~\(P_2\) (that is, they may have common endpoints) such that all paths in~\(R_2\) have as
one endpoint~\(z\) or an endpoint of~\(P_2\).  Suppose \(R_1\cap P_2 = \{z\}\) and \(R_2 \cap P_1 \subseteq \{z\}\).  If there exists a
\emph{connecting path} \(P\) such that \(z\notin P\), \(\varnothing\neq P\cap P_1\subseteq R_1\), and \(\varnothing\neq P\cap P_2\subseteq R_2\),
then \(P_1\) and \(P_2\) cannot both be longest paths.
\end{cor}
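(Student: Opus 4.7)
My plan is to reduce this to Lemma~\ref{lem:3paths2} or Lemma~\ref{lem:3paths}, depending on which piece of the union $R_2$ the connecting path actually meets.

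First I would sharpen the connecting path. List the vertices of $P \cap (P_1 \cup P_2)$ in the order they appear along $P$. By hypothesis they lie in $R_1 \cup R_2$, and no vertex lies in both: since $R_1 \subseteq P_1$, we have $R_1 \cap R_2 \subseteq R_2 \cap P_1 \subseteq \{z\}$, and $z \notin P$. Because $R_1$ and $R_2$ each meet $P$, two consecutive vertices $x \in R_1$ and $y \in R_2$ must appear in this ordering. Let $P'$ be the subpath of $P$ from $x$ to $y$; its interior avoids $P_1 \cup P_2$, so $P' \cap P_1 \subseteq R_1$, $P' \cap P_2 = \{y\}$, and $z \notin P'$.

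Now let $S$ be the unique piece of the union $R_2$ containing $y$, and split on whether $z \in S$. If $z \in S$, let $S'$ be the subpath of $S$ from $z$ to $y$: then $S'$ has $z$ as an endpoint, and since $S' \subseteq R_2$ and $z \in S'$ we get $S' \cap P_1 = \{z\}$. The paths $R_1$, $S'$, and the connecting path $P'$ then satisfy the hypotheses of Lemma~\ref{lem:3paths}, giving the desired contradiction. If $z \notin S$, the distinguished endpoint of $S$ cannot be $z$, so by assumption it is some endpoint $e$ of $P_2$; hence the subpath $S'$ of $S$ from $e$ to $y$ is a tail of $P_2$ with $S' \cap P_1 = \emptyset$. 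Shrinking $R_1$ to $\tilde R_1 := R_1 - z$ yields a tail of $P_1$ with $\tilde R_1 \cap P_2 = \emptyset$, and since $z \notin P'$ we still have $P' \cap P_1 \subseteq \tilde R_1$; so Lemma~\ref{lem:3paths2} applies to $\tilde R_1$, $S'$, and $P'$.

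The main difficulty is organisational rather than conceptual: after restricting to one piece $S$ and (in the second case) shaving $z$ off $R_1$, one must verify that the disjointness requirements of the target lemma survive intact. The dichotomy $z \in S$ versus $z \notin S$ is precisely what selects between the two lemmas, the former leaving $z$ as a shared endpoint of the two tails (the setting of Lemma~\ref{lem:3paths}), the latter letting us avoid $z$ entirely (the setting of Lemma~\ref{lem:3paths2}).
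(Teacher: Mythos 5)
Your proposal is correct and follows essentially the same route as the paper: locate consecutive intersection points $x\in R_1$ and $y\in R_2$ along $P$, restrict to the subpath of $P$ between them, identify the piece of $R_2$ containing $y$, and invoke Lemma~\ref{lem:3paths} or Lemma~\ref{lem:3paths2} according to whether that piece contains $z$. Your extra care in trimming $z$ off $R_1$ (and cutting $S$ at its distinguished endpoint) to meet the disjointness hypotheses exactly is a minor tightening of the paper's argument, not a different approach.
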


\begin{proof}
There exist vertices \(x \in P \cap R_1\) and \(y \in P \cap R_2\) such that the interior of the subpath~\(P^{x,y}\) of $P$ starting at~\(x\) and
ending in \(y\) does not contain any other vertices in~\(P_1\) or~\(P_2\). Let~\(R_2'\) be the path in~\(R_2\) that contains \(y\). If \(R_2'\)
contains~\(z\) then the statement follows from Lemma~\ref{lem:3paths} for longest paths~\(P_1\) and~\(P_2\) with their subpaths~\(R_1\) and~\(R_2'\),
respectively, and connecting path \(P^{x,y}\). Otherwise, the statement follows from Lemma~\ref{lem:3paths2} again for longest paths~\(P_1\)
and~\(P_2\), tails~\(R_1\) and~\(R_2'\), and connecting path~\(P^{x,y}\).
\end{proof}

The next two results are specific for series-parallel graphs. 

\begin{lem}\label{lem:SP}
Let $\D=\{v_1,v_2,v_3\}$ be a virtual triangle in a connected series-parallel graph $G$. 
If $R_i$ is a path in $G$ with $v_i$ as an endpoint and $R_i \cap \D = \{v_i\}$ for each $i\in\{1,2,3\}$,
then only one of the sets $R_1\cap R_2$, $R_1\cap R_3$, and $R_2\cap R_3$ can be nonempty. 
Furthermore, if $R_i\cap R_j\neq\varnothing$, then $R_i\cup R_j\subseteq C$ for some component $C\in\calC_{\{v_i,v_j\}|v_k}$.
\end{lem}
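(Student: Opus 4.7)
The plan is to use the 2-tree structure of $T(G)$ around the virtual triangle $\D=\{v_1,v_2,v_3\}$ to pin down where each $R_i$ can live, and then read off both statements of the lemma from a disjointness property of those regions. The key reduction is to show that each $R_i$ is either the trivial path $\{v_i\}$ or is contained in a single component of $\calC_{\{v_i,v_j\}|v_k}\cup\calC_{\{v_i,v_k\}|v_j}$; once this is in hand, both conclusions follow by bookkeeping.

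The first and hardest step is a structural claim about $T(G)$: every vertex $w\in V(T(G))\setminus\D$ lies in the interior of exactly one outer bridge of $\D$. For two outer bridges at the same edge of $\D$ this is precisely Tutte's Theorem~I.51 already cited in the preliminaries. For two outer bridges at different edges of $\D$, one can argue via the clique-tree description of the 2-tree $T(G)$: the triangles of $T(G)$ containing $w$ form a connected subtree of the dual tree, which cannot include the node $\D$ (since $w\notin\D$) and so lies within the dual-tree branch of a single outer bridge of $\D$. Equivalently, for any two distinct components $C,C'$ appearing among the sets $\calC_{\{v_i,v_j\}|v_k}$ as $\{i,j,k\}$ ranges over $\{1,2,3\}$, one has $V(C)\cap V(C')\subseteq\{v_i,v_j\}\cap\{v_{i'},v_{j'}\}\subseteq\D$.

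Given this, if $R_i$ has no edge then $R_i=\{v_i\}$ and $R_i\cap R_j\subseteq\{v_i\}\cap R_j=\varnothing$ because $R_j\cap\D=\{v_j\}$ forces $v_i\notin R_j$. Otherwise the first vertex of $R_i$ after $v_i$ is a neighbor of $v_i$ outside $\D$, hence lies in the interior of the unique outer bridge $C_i$ containing it; this bridge must have $v_i$ as one of its attachment vertices, and since $R_i-v_i$ is connected in $G-\D$ it stays entirely inside $C_i^\circ$, giving $R_i\subseteq C_i\in\calC_{\{v_i,v_j\}|v_k}\cup\calC_{\{v_i,v_k\}|v_j}$. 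If $R_i\cap R_j\neq\varnothing$, then any common vertex $w$ lies outside $\D$ (because $w\notin\{v_j,v_k\}$ and $w\notin\{v_i,v_k\}$), so $w$ is in the interior of a unique outer bridge, which forces $C_i=C_j$; as this common component has both $v_i$ and $v_j$ among its attachment vertices, it must lie in $\calC_{\{v_i,v_j\}|v_k}$, proving $R_i\cup R_j\subseteq C_i$. Finally, if both $R_1\cap R_2\neq\varnothing$ and $R_1\cap R_3\neq\varnothing$, then $R_1$ would be contained in a component of $\calC_{\{v_1,v_2\}|v_3}$ and in one of $\calC_{\{v_1,v_3\}|v_2}$; by the structural claim their vertex sets meet only in $\{v_1\}$, so $R_1=\{v_1\}$, contradicting $R_1\cap R_2\neq\varnothing$. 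The main obstacle is thus the structural claim for outer bridges at different edges of $\D$; everything after that is routine.
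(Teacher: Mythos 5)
Your proof is correct, but it takes a genuinely different route from the paper's. You front-load a global structural fact about the 2-tree $T(G)$ --- that the interiors of the ``outer'' bridges of $\D$ (the members of $\calC_{\{v_i,v_j\}|v_k}$ over the three edges of $\D$) partition $V(T(G))\setminus\D$ --- and then both assertions follow by locating each nontrivial $R_i$ inside the unique outer bridge attached at $v_i$. The paper instead proves the first assertion directly and self-containedly: from $R_1\cap R_2\neq\varnothing$ it builds a $v_1$--$v_2$ path $S$ avoiding $v_3$ and the virtual edge $\{v_1,v_2\}$, and a second nonempty intersection would connect $v_3$ to an internal vertex $x$ of $S$, so that $\{x,v_1,v_2,v_3\}$ yields a $K_4$ minor in $T(G)$; the second assertion is then derived from the first by a bridge argument at the single edge $\{v_i,v_j\}$. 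Your route buys a reusable, arguably cleaner invariant (the partition into outer bridges), while the paper's stays more elementary and invokes Tutte's bridge theory only one edge at a time. One caveat on your hardest step: the claim for bridges at different edges of $\D$ is only sketched, and the ``dual graph'' of triangles of a 2-tree is not literally a tree when an edge has several common neighbors (the triangles of a fan are pairwise adjacent), so you genuinely need a clique tree --- or, more elementarily, an induction on the construction order of $T(G)$ --- plus a short argument that each branch of the clique tree at the node $\D$ meets $\D$ in an edge and lies inside the corresponding outer bridge (and that every vertex outside $\D$ lies in \emph{at least} one outer bridge, which your argument also uses). These facts are true and standard, so this is a matter of filling in detail rather than a gap.
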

\begin{proof}
For the first statement, assume without loss of generality that $R_1\cap R_2\neq\varnothing$. 
Then we have a path $S$ from $v_1$ to $v_2$ consisting of $R_1 \con{v_1} R_2$ and the tail of $R_2$ containing $v_2$.
Note that this path does not use $v_3$ or the virtual edge $\{v_1,v_2\}$ because $R_i$ contains only $v_i$ in $\D$ for $i=1,2$.  

If additionally $R_1\cap R_3\neq\varnothing$ or $R_2\cap R_3\neq\varnothing$, then $v_3$ is connected to $S$ by a path $S'\cup S''$, where~$S'$ is the
shortest tail of $R_3$ from $v_3$ to a vertex $u$ in $R_1\cup R_2$, and $S''$ is a shortest path from $u$ to $S$ in the connected graph $R_1\cup R_2$.
Let $x$ be the endpoint of $S''$ in $S$. Observe that $x$ is an internal vertex of $S$. So~$\{x,v_1,v_2,v_3\}$ determines a $K_4$ minor in $T(G)$, a
contradiction.

For the second statement, suppose that $R_i$ and $R_j$ intersect. Obviously, $H=(R_i\cup R_j)-v_i-v_j$ must lie in the interior of a
$\{v_i,v_j\}$-bridge $B$ of~$T(G)$. Also, the edge $\{v_i,v_j\}$ is a cut set in~$T(G)$, separating~$v_k$ from~$H$. Otherwise we would have three
paths as above, namely $R_i$, $R_j$, and the path from~$v_k$ to~$H$ avoiding $v_i$ and $v_j$, and at least two of the pairs within these three paths
would intersect.  Therefore $B \in \calC_{\{v_i,v_j\}|v_k}(T(G))$ and thus $R_i\cup R_j \subseteq C=G[V(B)] \in \calC_{\{v_i,v_j\}|v_k}(G)$.
\end{proof}

\begin{lem}\label{lem:SP2}
Let $\D=\{v_1,v_2,v_3\}$ be a virtual triangle in a connected series-parallel graph $G$, and~$R$ be a path in $G$ with $v_i$ as endpoint and $R \cap
\D = \{v_i\}$, for some $i$ in $\{1,2,3\}$. Let $j$ and $k$ be such that $\{i,j,k\}=\{1,2,3\}$.  If $S_1$ is a path with endpoints $v_i$ and $v_j$
such that $S_1 \cap \D = \{v_i,v_j\}$, and $S_2$ is a path with endpoints $v_j$ and $v_k$ such that $S_2 \cap \D = \{v_j,v_k\}$, then $R \cap S_2 =
\varnothing$ and $S_1\cap S_2=\{v_j\}$.
\end{lem}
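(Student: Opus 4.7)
The plan is to apply Lemma~\ref{lem:SP} twice, once per conclusion, by splitting $S_1$ or $S_2$ into subpaths that, together with $R$ (in the first case) or with each other (in the second), meet Lemma~\ref{lem:SP}'s hypothesis of three paths anchored at the three vertices of $\Delta$. Each conclusion will then fall out of the constraint that at most one pair among such a triple of paths can have a nonempty intersection.

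For $R\cap S_2=\varnothing$, I first dispose of the trivial case where $S_2$ consists of a single edge: then the vertex set of $S_2$ is $\{v_j,v_k\}$, which is disjoint from $R$ because $R\cap\Delta=\{v_i\}$. Otherwise, I pick any internal vertex $m$ of $S_2$ and write $S_2=R_j'\cup R_k'$, where $R_j'$ is the subpath of $S_2$ from $v_j$ to $m$ and $R_k'$ the subpath from $v_k$ to $m$. Since $S_2\cap\Delta=\{v_j,v_k\}$, these tails satisfy $R_j'\cap\Delta=\{v_j\}$ and $R_k'\cap\Delta=\{v_k\}$. Then the triple $R,R_j',R_k'$ meets the hypothesis of Lemma~\ref{lem:SP}, and since $R_j'\cap R_k'\ni m$, the lemma forces $R\cap R_j'=R\cap R_k'=\varnothing$, giving $R\cap S_2=\varnothing$.

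For $S_1\cap S_2=\{v_j\}$, clearly $v_j$ lies in both paths, so suppose for contradiction that some $x\in S_1\cap S_2$ satisfies $x\neq v_j$. Because $v_i\notin S_2$ and $v_k\notin S_1$, such an $x$ must be an internal vertex of both $S_1$ and $S_2$, and in particular $x\notin\Delta$. Let $T_i$ be the subpath of $S_1$ from $v_i$ to $x$, $T_j$ the subpath of $S_1$ from $v_j$ to $x$, and $T_k$ the subpath of $S_2$ from $v_k$ to $x$. Each $T_\ell$ has $v_\ell$ as its only vertex in $\Delta$, so Lemma~\ref{lem:SP} applies to $T_i,T_j,T_k$; but all three pairs of these tails share $x$, contradicting the lemma's conclusion that at most one pair can intersect. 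I do not foresee any serious obstacle: the argument is a clean double use of Lemma~\ref{lem:SP}, with only the one-line edge case when $S_2$ is a single edge, and the hypothesis on $R$ is used only for the first conclusion.
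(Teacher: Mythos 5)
Your proof is correct and follows essentially the same route as the paper: both conclusions are obtained by splitting one of the given paths at an internal (or intersection) vertex to manufacture three paths anchored at $v_1$, $v_2$, $v_3$ and then invoking Lemma~\ref{lem:SP}. The only cosmetic differences are that you split $S_2$ at an arbitrary internal vertex rather than at the hypothetical intersection point for the first claim, and split $S_1$ instead of $S_2$ (the paper uses $S_1-v_j$ with $S_2$ split at $y$) for the second; neither changes the substance of the argument.
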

\begin{proof}
Assume for a contradiction that there is some vertex $x \in R \cap S_2$. Split the path $S_2$ at $x$ and look at the two tails $S_2^j$ and $S_2^k$
starting at $x$ and ending at $v_j$ and $v_k$, respectively. Now $R$, $S_2^j$, and $S_2^k$ are three paths as in Lemma~\ref{lem:SP} and
they all intersect at $x$, a contradiction.

Similarly, if $y\in (S_1\cap S_2)\backslash\{v_j\}$, split $S_2$ analogously at $y$ obtaining $S_2^j$ and $S_2^k$. Then $S_1-v_j$, $S_2^j$,
and~$S_2^k$ are three paths as in Lemma~\ref{lem:SP} and they all intersect at $y$, again a contradiction.
\end{proof}

\section{Intersection of longest paths in series-parallel graphs}
\label{sec:series-parallel}


As we have already mentioned in Section~\ref{sec:intro}, de~Rezende at el.~\cite{deRezendeFMW13} proved that the intersection of all longest paths of a 2-tree is nonempty. In this section, we extend this result proving that all connected subgraphs of 2-trees, that is, all series-parallel graphs, have also this property. We proceed in four steps. First, we prove in Lemma~\ref{lem:triang} that there exists a virtual Gallai triangle. Then, we show in Lemma~\ref{lem:edge} that actually one virtual edge of this triangle is a virtual Gallai edge and there exists a component generated by this virtual edge that satisfies certain properties. In Lemma~\ref{lem:iter2} we prove that either one of the endpoints of this virtual edge is a Gallai vertex or we can find an adjacent virtual Gallai edge and a strictly smaller component satisfying the same properties. By iterating, we end up with a Gallai vertex since we only consider finite graphs. 
%

\begin{lem}\label{lem:triang}
In every non-trivial connected series-parallel graph \(G\), 
there exists a virtual Gallai triangle.
\end{lem}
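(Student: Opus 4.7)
My plan is to argue by contradiction. Suppose no virtual triangle of $T(G)$ is a Gallai set. Then for each virtual triangle $\Delta = \{v_1, v_2, v_3\}$ there is a longest path $P_\Delta$ of $G$ disjoint from $V(\Delta)$. Since $G \subseteq T(G)$, the connected subgraph $P_\Delta$ lies inside some connected component of $T(G) - V(\Delta)$. A short $K_4$-minor argument shows that each such component attaches to at most two vertices of $\Delta$ in $T(G)$: otherwise, contracting the component to a single vertex together with $\Delta$ would produce a $K_4$-minor, forbidden in the 2-tree $T(G)$. Hence $P_\Delta \subseteq B^\circ_{\{v_i, v_j\}, w}(T(G))$ for some edge $\{v_i, v_j\}$ of $\Delta$ and some common $T(G)$-neighbor $w$ of $v_i$ and $v_j$ distinct from the third vertex $v_k$ of $\Delta$. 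I then orient $\Delta \to \Delta'$ where $\Delta' = \{v_i, v_j, w\}$, and label this arrow with the virtual edge $\{v_i, v_j\}$.

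The first observation I will establish is that two consecutive arrows $\Delta \to \Delta' \to \Delta''$ cannot share the same label. If they did, say $e = \{v_i, v_j\}$, then $P_\Delta \in B^\circ_{e, w}(T(G))$ while $P_{\Delta'} \in B^\circ_{e, w'}(T(G))$ with $w' \neq w$ (since the third vertex of $\Delta'$ not in $e$ is precisely $w$, and $w'$ has to differ from it). By the uniqueness of bridges cited as Theorem~I.51 of~\cite{Tutte01}, these two bridge interiors are vertex-disjoint, so $P_\Delta \cap P_{\Delta'} = \varnothing$, contradicting Proposition~\ref{prop:ore}.

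Next I will verify that whenever $\Delta \to \Delta' \to \Delta''$ carries distinct labels $e = \{v_i, v_j\}$ and $e' = \{v_i, w\}$ (the case $e' = \{v_j, w\}$ is symmetric), the corresponding bridge interiors satisfy $B^\circ_{e', w''}(T(G)) \subsetneq B^\circ_{e, w}(T(G))$, where $\Delta'' = \{v_i, w, w''\}$. The key point is that $v_j$ is a common $T(G)$-neighbor of $v_i$ and $w$ distinct from $w''$, hence lies in a different bridge of $e'$, so $v_j \notin B^\circ_{e', w''}$. For any $u \in B^\circ_{e', w''}$, the connected bridge interior therefore provides a path from $u$ to $w''$ avoiding $\{v_i, w, v_j\}$, which can be extended by the edge $\{w'', w\}$ of $T(G)$ to give a path from $u$ to $w$ in $T(G) - \{v_i, v_j\}$; this places $u$ in $B^\circ_{e, w}$. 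The inclusion is strict since $w$ itself lies in $B^\circ_{e, w}$ but not in $B^\circ_{e', w''}$.

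Combining these two facts, the orientation chain starting at any triangle is a sequence of strictly nested bridge interiors inside the finite 2-tree $T(G)$, so it must terminate after finitely many steps. At its last triangle no outgoing arrow is available, meaning no longest path avoids all three of its vertices; that is, it is a virtual Gallai triangle, contradicting the initial assumption. The main technical hurdle will be the bridge-shrinking step, which hinges on the bridge-uniqueness property and the $K_4$-minor-freeness of 2-trees; the degenerate case $T(G) = K_3$ is handled directly, since every longest path of the corresponding three-vertex graph meets the single triangle of $T(G)$.
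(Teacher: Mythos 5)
Your proof is correct and follows essentially the same strategy as the paper's: starting from an arbitrary virtual triangle, repeatedly pass to the adjacent triangle whose bridge interior contains a longest path avoiding the current one, use the fact that any two longest paths intersect to rule out staying on the same virtual edge, and terminate by the strict nesting of bridge interiors in the finite 2-tree $T(G)$. Your presentation merely packages this descent as a contradiction via an orientation on triangles and isolates the nesting step as a purely structural fact about 2-trees, but the underlying argument is the paper's.
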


\begin{proof}
Take any virtual triangle \(\D_{0}\) of a non-trivial connected series-parallel graph \(G\). Note that, in every connected series-parallel graph, every virtual edge is either a cut set of $G$ or is contained in exactly one virtual triangle. Assume that there exists a longest path \(P_0\) in \(G\) containing no vertex of \(\D_{0}\). Then there exists a virtual edge $e_0 \subseteq \D_0$, which is a cut set, and a vertex $z_0 \notin \D_0$ such that $z_0$ is adjacent to both endpoints of $e_0$ in $T(G)$ and $P_0$ lies in the component generated by $e_0$ in direction $z_0$, that is, $P_0 \subseteq C_{e_0,z_0}^\circ$. By Proposition~\ref{prop:ore}, all longest paths must intersect \(P_0\) and so they have at least one vertex in $C_{e_0,z_0}^\circ$. Note that \(\D_1 = e_0 \cup \{z_0\}\) is a triangle in $T(G)$ and thus a virtual triangle in \(C_{e_0,z_0}\). 
Now either all longest paths contain a vertex of \(\D_1\) and we are done, or there exist a longest path $P_1$, a virtual edge $e_1 \subseteq \D_1$, where $e_1 \neq e_0$ and $e_1$ is a cut set, and a vertex $z_1 \notin \D_1$ such that $z_1$ is adjacent to both endpoints of~$e_1$ in $T(G)$ and \(P_1 \subseteq C_{e_1,z_1}\). Note that \(C_{e_1,z_1} \subsetneq C_{e_0,z_0}\), as~\(P_1\) must intersect \(P_0\) in \(C_{e_0,z_0}^\circ\) again by Proposition~\ref{prop:ore}. Iteratively, obtain \(\D_2\) and \(C_{e_2,z_2}\) and eventually a strictly decreasing sequence of components \(C_{e_0,z_0}\supsetneq C_{e_1,z_1} \supsetneq C_{e_2,z_2} \supsetneq \cdots \supsetneq C_{e_k,z_k}\). Since \(G\) is finite, this process ends with some triangle \(\D=\D_k\) such that all longest paths contain one vertex of \(\D\). 
\end{proof}

Next we prove that one of the virtual edges of a Gallai triangle is a virtual Gallai edge and there exists a component generated by this virtual edge that satisfies certain properties.


\begin{lem}\label{lem:edge}
For every connected series-parallel graph \(G=(V,E)\), there exists a Gallai vertex, or a virtual Gallai edge \(\{u, v\}\) and a component \(C \in
\mathcal C_{\{u,v\}}\) such that, for every pair \[(P, P') \in (\LeB{u}v \times \LeB{v}u) \cup (\LeB{u}v \times \Le{u}v) \cup (\LeB{v}u \times
\Le{u}v),\] there exists a vertex in \(C^\circ \cap P \cap P'\).
\end{lem}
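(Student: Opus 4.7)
The plan is to start from the virtual Gallai triangle $\Delta=\{u,v,w\}$ produced by Lemma~\ref{lem:triang} and to refine it in two stages: first, pin down a Gallai edge of $\Delta$; second, identify a specific component generated by that edge. Throughout, we may assume no vertex of $\Delta$ is itself a Gallai vertex, for otherwise the first alternative in the statement is already met.

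For the first stage, I would show that at least one of the three virtual edges of $\Delta$ is a virtual Gallai edge. Suppose to the contrary that none is. Then for each $x\in\Delta$ there exists a longest path $P_x$ containing $x$ but avoiding the other two triangle vertices. Since $P_x$ is connected, lies in $G\subseteq T(G)$, and avoids the two vertices of the edge $e_x$ opposite to $x$ in $\Delta$, it must be contained in $C_{e_x,x}^\circ$, the interior of the component generated by $e_x$ in the direction $x$. Proposition~\ref{prop:ore} supplies common vertices in each of $P_u\cap P_v$, $P_u\cap P_w$, and $P_v\cap P_w$. For each such pair I would single out the tails of the two paths (starting at the respective triangle vertices) that contain the chosen common vertex, and apply Lemma~\ref{lem:SP} to the resulting triples of tails: its first conclusion constrains which pairs of tails from $u,v,w$ can intersect, and its second conclusion pins every pairwise intersection into a component of $\calC_{\{u,v\}|w}$, $\calC_{\{u,w\}|v}$, or $\calC_{\{v,w\}|u}$, according to the pair. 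Tracking which of the two tails of each $P_x$ houses each intersection and combining with the length-swap arguments of Corollary~\ref{cor:3paths3} (or directly with Lemmas~\ref{lem:3paths} and~\ref{lem:3paths2}) should produce a path strictly longer than some $P_x$, contradicting that they are longest. Up to relabeling, this establishes that $\{u,v\}$ is a virtual Gallai edge.

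For the second stage, I would take $C=C_{\{u,v\},w}$, the component generated by $\{u,v\}$ in the direction of the third triangle vertex $w$, and verify the required property separately for each of the three pair types. For $(P,P')\in\LeB{u}{v}\times\LeB{v}{u}$, Proposition~\ref{prop:ore} supplies $y\in P\cap P'$ with $y\notin\{u,v\}$; if $y$ were to lie in the interior of some other component $C'\in\calC_{\{u,v\}}\setminus\{C\}$, then the subpath of $P$ from $u$ to $y$ through $C'$ and the subpath of $P'$ from $v$ to $y$ through $C'$, together with the tails of $P$ and $P'$ extending away from $\Delta$, provide the configuration of Corollary~\ref{cor:3paths3} and yield a path strictly longer than $P$ or $P'$, a contradiction. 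Hence $y\in C^\circ$. For the pair types involving $\Le{u}{v}$, I would analogously use that the $u$--$v$ subpath of any element of $\Le{u}{v}$ is confined to a single component of $\calC_{\{u,v\}}$ (forced by the series-parallel structure, since any $u$--$v$ path must stay within a single $\{u,v\}$-component), and then a swap argument against a path from $\LeB{u}{v}$ or $\LeB{v}{u}$ shows this component must coincide with $C$.

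The main obstacle is the second stage. It is a priori conceivable that different pairs of longest paths meet in different components of $\calC_{\{u,v\}}$, but the lemma demands that a single $C$ suffices uniformly over all three pair types. Forcing this likely requires a careful combination of the length-swap arguments of Lemmas~\ref{lem:3paths2}, \ref{lem:3paths}, and Corollary~\ref{cor:3paths3} with the structural rigidity provided by Lemmas~\ref{lem:SP} and~\ref{lem:SP2}; most of the bookkeeping in the proof will likely live here.
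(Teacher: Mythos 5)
Your first stage follows the same route as the paper: assume no edge of $\D$ is a virtual Gallai edge, take $P_u\in\LBB{u}{v}{w}$, $P_v\in\LBB{v}{u}{w}$, $P_w\in\LBB{w}{u}{v}$, localize the pairwise intersections into components generated by the three virtual edges, and derive a contradiction by a length swap. You leave the actual swap as intent (``should produce a path strictly longer''); the paper's version is concrete and short: one of the six pieces, say $P_u[C_{uv}]$, has length at least $L/2$, and then $P_u[C_{uv}]$ followed by $P_u[C_{uw}]\con{u}P_w$ followed by a longer tail of $P_w$ is too long. That part is recoverable.

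The second stage, however, contains a genuine error: the component you pick, $C=C_{\{u,v\},w}$ (the one toward the third triangle vertex), is in general exactly the component that \emph{cannot} work. Suppose $\{u,v\}$ is the only virtual Gallai edge of $\D$, so that both $\LBB{u}{v}{w}$ and $\LBB{v}{u}{w}$ are nonempty. Take $P\in\LBB{u}{v}{w}$ and $P'\in\LBB{v}{u}{w}$; this pair lies in $\LeB{u}{v}\times\LeB{v}{u}$, so your $C$ must contain a common vertex of $P$ and $P'$ in its interior. But every vertex of $P\cap P'$ lies on a tail of $P$ starting at $u$ and a tail of $P'$ starting at $v$, each meeting $\D$ only in its own endpoint, and the second statement of Lemma~\ref{lem:SP} forces any intersecting such pair of tails into a component of $\calC_{\{u,v\}|w}$ --- that is, \emph{away} from the direction of $w$. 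Hence $P\cap P'\cap C^\circ_{\{u,v\},w}=\varnothing$, and your sketched appeal to Corollary~\ref{cor:3paths3} would be ``proving'' a false statement (no valid connecting path exists for it). The paper's proof instead chooses $C$ as the component containing an \emph{extremal} tail: among all paths in $\LBB{u}{v}{w}\cup\LBB{v}{u}{w}$ it fixes one, $P_u$, whose tail $P_u'$ at $u$ (or $v$) is as long as possible, and lets $C$ be the component of $\calC_{\{u,v\}|w}$ (or, in one subcase, of $\calC_{\{u,w\}|v}$) containing $P_u'$; the maximality of $|P_u'|$ is then the engine of every subsequent swap argument, and it is precisely the ingredient your proposal lacks. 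You also omit the subcase in which all three edges of $\D$ are Gallai edges and all of $\LB{x}{y}{z}$ are nonempty, which the paper must treat separately (its Lemma~\ref{lem:triang3}) and where the chosen component is again \emph{not} the one toward the remaining triangle vertex. As written, the ``main obstacle'' you identify at the end is the actual content of the lemma, and the proposal does not overcome it.
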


Before presenting the proof of Lemma~\ref{lem:edge}, we prove an intermediate result stated in the next lemma.  Throughout the next proofs, we keep
Lemmas~\ref{lem:SP} and~\ref{lem:SP2} in mind and use them implicitly \emph{whenever} we claim that certain constructions are indeed paths and
whenever we claim that a path lies in a certain component.

For the proof of Lemmas~\ref{lem:triang3} and~\ref{lem:iter2}, we use the following notation. Every path \(P \in \LB{u}vw\) can be split at \(u\) and
\(v\), resulting in three subpaths. Let \(P^{(u)}\) and~\(P^{(v)}\) be the tails of~\(P\) starting at vertex \(u\) and vertex \(v\), respectively. The
remaining subpath, joining~\(u\) and~\(v\), is denoted by \(P^{(u,v)}\). Analogously, a path \(P\in \Lo{u}vw\) is split into \(P^{(u)}\),
\(P^{(u,v)}\), \(P^{(v,w)}\), and~\(P^{(w)}\) (see Figure~\ref{fig:split}).

\begin{figure}[!htbp]
 \centering
  \includegraphics[scale=0.55]{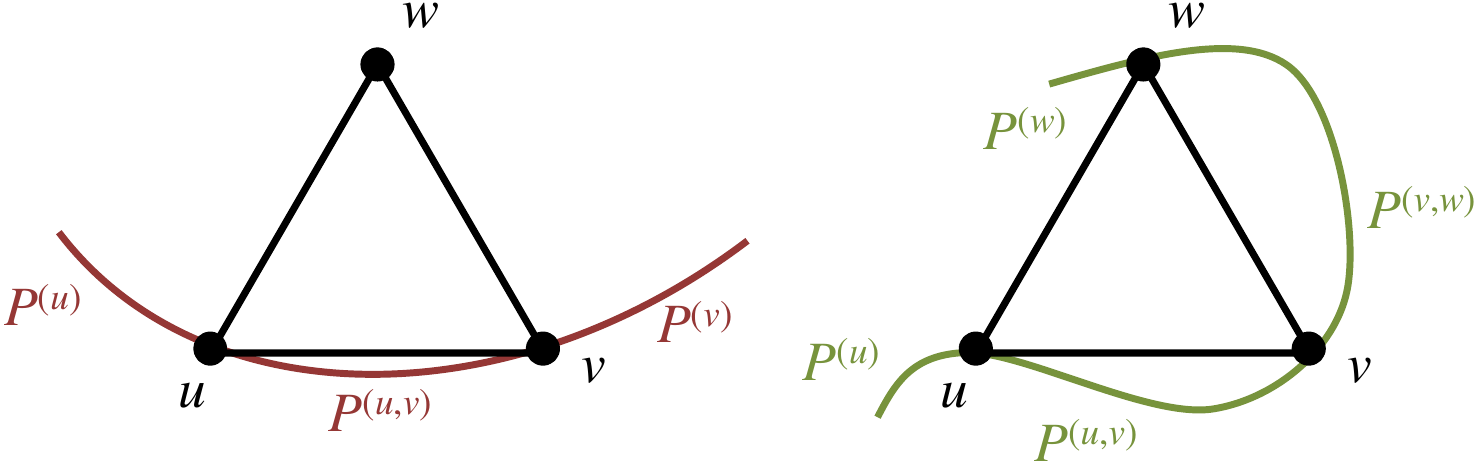}
  \caption{Splitting \(P \in \LB{u}vw\) at vertices \(u\) and \(v\), and \(P \in \Lo{u}vw\) at vertices \(u\), \(v\), and~\(w\).}
\label{fig:split}
\end{figure}


\begin{lem}\label{lem:triang3}
Let \(\D\) be a virtual Gallai triangle in a non-trivial connected series-parallel graph~\(G\). If \(\LB{x}{y}{z} \neq \varnothing\) 
for every $x$, $y$, $z$ such that $\{x,y,z\} = \D$, then, for some $u$, $v$, $w$ such that $\{u,v,w\} = \D$, there is a component \(C \in \mathcal
C_{\{u,w\}|v}\) such that, for every pair
 \[(P, P') \in \left( \bigcup_{\{x,y,z\}=\D} \LB{x}{y}{z} \times \LB{x}{z}{y} \right) 
                                            \cup (\LB{u}{v}{w} \times \Luvw{u}{v}{w}) \cup (\LB{v}{w}{u} \times \Luvw{u}{v}{w}),\] 
there exists a vertex in \(C^\circ \cap P \cap P'\).
\end{lem}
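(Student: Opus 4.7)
The plan is a structural case analysis guided by the component decomposition of $G$ around the virtual Gallai triangle $\D=\{u,v,w\}$. Since every class $\LB{x}{y}{z}$ is nonempty by hypothesis, I would begin by picking a representative path $P\in\LB{x}{y}{z}$ for each ordering and splitting it as $P^{(x)}\cup P^{(x,y)}\cup P^{(y)}$. The key preliminary observation is that each of these three subpaths is pinned down by the components of $G$ at the virtual edges of $\D$: the middle piece $P^{(x,y)}$ either coincides with the virtual edge $\{x,y\}$ (if it exists in $G$) or lies in a single component $C_{\{x,y\},a}$ with $a\neq z$, and by Lemma~\ref{lem:SP} each tail $P^{(x)}$, $P^{(y)}$ attaches at its endpoint inside a single component of the corresponding virtual edge. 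This reduces the question to a discrete combinatorial problem about which ``directions'' each path chooses at each vertex of $\D$.

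Next I would compare pairs that share only one vertex of $\D$. Given $P\in\LB{x}{y}{z}$ and $P'\in\LB{x}{z}{y}$, Proposition~\ref{prop:ore} guarantees at least one common vertex. Assuming for contradiction that no such vertex sits in the desired component, one applies Corollary~\ref{cor:3paths3} to the tails of $P$ and $P'$ at $x$ together with a connecting path routed through a suitable other component of $\D$; this yields a path strictly longer than $P$ or $P'$, a contradiction. Iterating this for each of the three possible shared vertices produces three compatibility constraints. Working through them singles out one vertex of $\D$ as a ``middle'' vertex — this will be named $v$ — and identifies the opposite virtual edge $\{u,w\}$ together with a component $C\in\calC_{\{u,w\}|v}$ in which every pairwise intersection of the first type is forced to live. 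The two remaining pair types $\LB{u}{v}{w}\times\Luvw{u}{v}{w}$ and $\LB{v}{w}{u}\times\Luvw{u}{v}{w}$ are handled by the same machinery: Lemma~\ref{lem:SP2} forces $P^{(u,v)}$ and $P^{(v,w)}$, for $P\in\Luvw{u}{v}{w}$, to lie in specific components at $\{u,v\}$ and $\{v,w\}$, and Lemmas~\ref{lem:3paths} and~\ref{lem:3paths2} then push the intersection with a path from the corresponding $\LB{}{}{}$-class into $C^\circ$.

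The main obstacle I expect is the bookkeeping needed to make all these constraints compatible with a single choice of $\{u,w\}$ and $C$. The three constraints coming from the first case analysis and the two further ones from the $\Luvw{u}{v}{w}$-pairs must agree, and the most delicate step is arguing, using the Gallai property of $\D$ together with the nonemptiness of every $\LB{x}{y}{z}$-class, that the labeling $(u,v,w)$ is genuinely \emph{forced} by the configuration and not merely a symmetric relabeling choice. Once that is settled, the opposite edge $\{u,w\}$ together with the component $C\in\calC_{\{u,w\}|v}$ absorbs all the required intersections, and the various applications of Corollary~\ref{cor:3paths3} localize them inside $C^\circ$ as claimed.
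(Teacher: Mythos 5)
Your outline stops short of the one idea that actually makes this lemma work. You correctly identify the crux yourself --- ``the most delicate step is arguing \dots that the labeling $(u,v,w)$ is genuinely forced'' --- but you never resolve it, and the scheme you sketch for resolving it (deriving ``three compatibility constraints'' from pairwise applications of Corollary~\ref{cor:3paths3} and then ``working through them'') is circular as stated. Corollary~\ref{cor:3paths3} needs a connecting path whose intersection with $P$ and $P'$ is confined to prescribed tails; in this lemma the only candidates for such connecting paths are tails of \emph{other} longest paths from the classes $\LB{x}{y}{z}$, and you cannot certify where those tails live until you already know which virtual edge $\{u,w\}$ and which component $C\in\calC_{\{u,w\}|v}$ the statement is about. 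The paper breaks this circularity with an extremal choice: among all paths $P$ in $\LB{u}{v}{w}\cup\LB{u}{w}{v}\cup\LB{v}{w}{u}$ and all $x\in\D\cap P$, pick the pair maximizing $|P^{(x)}|$, and relabel so that $P\in\LB{u}{v}{w}$ and $x=u$. This maximality immediately forces $P^{(u)}$ to meet every $P_u^{(w)}$ (else $P^{(u)}\cup P^{(u,v)}\cup P_u^{(v,w)}\cup P_u^{(w)}$ beats $L$, using $|P^{(u)}|\ge|P_u^{(v)}|$ and $|P^{(u,v)}|\ge 1$), which by Lemma~\ref{lem:SP} pins $P^{(u)}$ into a single component $C\in\calC_{\{u,w\}|v}$; it then forces $|P_w^{(u)}|=|P^{(u)}|$ for every $P_w\in\LB{u}{v}{w}$ so the same argument propagates to the whole class; and it supplies the length inequalities needed in every subsequent application of Lemmas~\ref{lem:3paths2}, \ref{lem:3paths}, and Corollary~\ref{cor:3paths3}. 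Without some such extremal (or equivalent) selection principle, nothing in your proposal determines $v$, $\{u,w\}$, or $C$, and the asymmetry of the conclusion (only $\LB{u}{v}{w}\times\Luvw{u}{v}{w}$ and $\LB{v}{w}{u}\times\Luvw{u}{v}{w}$ appear, not $\LB{u}{w}{v}\times\Luvw{u}{v}{w}$) cannot be accounted for.

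A secondary gap: even granting a choice of $C$, your treatment of the pairs involving $\Luvw{u}{v}{w}$ is too coarse. For $Q\in\Lo{u}{v}{w}$ the paper must split into the cases $|Q^{(u)}|<|P^{(u)}|$ and $|Q^{(u)}|=|P^{(u)}|$, in each case first establishing an auxiliary intersection ($P_w^{(u)}$ with $Q^{(w)}$, or $Q^{(u)}$ with $P_u^{(w)}$) by a direct length comparison before Corollary~\ref{cor:3paths3} can be invoked with $Q[C]$ as the subpath system; Lemma~\ref{lem:SP2} alone, which is what you lean on here, only tells you that certain concatenations are genuine paths --- it does not produce the connecting path. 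So the proposal is a reasonable description of the landscape of the proof, but the argument that would actually close it is missing.
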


\begin{proof2}[Lemma~\ref{lem:triang3}]
Let $P \in \LB{u}{v}{w} \cup \LB{u}{w}{v} \cup \LB{v}{w}{u}$, where $\{u,v,w\} = \D$, and $x \in \D$ be such that~$x$ is in $P$ and $P^{(x)}$
is as long as possible. Without loss of generality, we may assume $P \in \LB{u}{v}{w}$ and $x=u$. 
In what follows, we use $P_z$ to refer to an arbitrary path in $\LB{x}{y}{z}$ where $\{x,y,z\} = \D$.
 
First note that~$P^{(u)}$ intersects every~$P_u^{(w)}$, otherwise $P^{(u)} \cup P^{(u,v)} \cup P_u^{(v,w)} \cup P_u^{(w)}$ is a path of length
strictly greater than~$L$ by the choice of~$P$.  Thus,~$P^{(u)}$ must lie in a component $C \in \calC_{\{u,w\}|v}$.  We will prove that $C$
has the property stated in the lemma.

We start by proving that each path in $\LB{u}{v}{w}$ intersects in $C^\circ$ every path in $\LB{v}{w}{u}$, that is, we show that each $P_w^{(u)}$ intersects
every $P_u^{(w)}$. Observe that $|P_w^{(u)}| = |P^{(u)}|$, otherwise $P^{(u)} \cup P_w^{(u,v)} \cup P_w^{(v)}$ would be a path of length strictly
greater than~$L$. So the argument previously applied to~$P$, now with~$P_w$ instead, implies that each~$P_w^{(u)}$ intersects every~$P_u^{(w)}$.

Now we prove that each path in $\LB{u}{v}{w}\cup \LB{v}{w}{u}$ intersects in $C^\circ$ every path in $\LB{u}{w}{v}\cup \Luvw{u}{v}{w}$.  
First note that each $P_w^{(u)}$ intersects $Q^{(u,w)}$ in~$C^\circ$ for every $Q \in \LB{u}{w}{v}\cup \Lo{u}{w}{v} \cup \Lo{v}{u}{w}$, otherwise 
$P_w^{(u)} \cup Q^{(u,w)} \cup P_u^{(w,v)} \cup P_u^{(v)}$ is a path of length strictly greater than~$L$ by the choice of~$P$. As both $\LB{u}{w}{v}$
and~$\LB{v}{w}{u}$ are nonempty, there exist at least one such $Q$ and one such $P_u$.
So $Q$ and~$P_u^{(w)}$ must intersect in~$C^\circ$, otherwise we derive a contradiction from Lemma~\ref{lem:3paths} for subpaths \(P_u^{(w)}\)
and~\(Q^{(u,w)}\), \(z=w\), and a connecting path contained in \(P_w^{(u)}\).  Second, we prove that each $P_w^{(u)}$ and $P_u^{(w)}$ intersect every
$Q \in \Lo{u}{v}{w}$ in $C^\circ$.  Observe that $|Q^{(u)}| \leq |P^{(u)}|$, otherwise either ${Q^{(u)} \cup P^{(u,v)} \cup P^{(v)}}$ or ${Q^{(u)} \cup
P_v^{(u,w)} \cup P_v^{(w)}}$ is a path of length strictly greater than~$L$ by the choice of~$P$.  If $|Q^{(u)}| < |P^{(u)}| = |P_w^{(u)}|$, then
$P_w^{(u)}$ intersects~$Q^{(w)}$, otherwise $P_w^{(u)} \cup Q^{(u,v)} \cup Q^{(v,w)} \cup Q^{(w)}$ is a path of length strictly greater
than~$L$. So~$P_u^{(w)}$ and $Q$ must intersect in~$C^\circ$, otherwise we derive a contradiction from Corollary~\ref{cor:3paths3} for longest paths
$P_u$ and $Q$ with \(z=w\), tail $P_u^{(w)}$, subpaths \(Q[C]\), and connecting path \(P_w^{(u)}\).  If $|Q^{(u)}| = |P^{(u)}|$, then~$Q^{(u)}$
intersects~$P_u^{(w)}$, otherwise $Q^{(u)} \cup Q^{(u,v)} \cup P_u^{(v,w)} \cup P_u^{(w)}$ is a path of length strictly greater than~$L$ by the choice
of~$P$.
So~$P_w^{(u)}$ and $Q$ must intersect in~$C^\circ$, otherwise we derive a contradiction from Corollary~\ref{cor:3paths3} for longest paths~$P_w$
and~$Q$ with \(z=u\), tail~$P_w^{(u)}$, subpaths \(Q[C]\), and connecting path~\(P_u^{(w)}\).
\end{proof2}


\begin{proof2}[Lemma~\ref{lem:edge}]
If \(G\) is trivial, there exists a Gallai vertex. Otherwise, let \(\D= \{u,v,w\}\) be a virtual Gallai triangle, which exists by Lemma~\ref{lem:triang}.


First, we show that at least one of the edges of \(\D\) is a virtual Gallai edge.
Assume for a contradiction that no edge of \(\D\) is a virtual Gallai edge, which means that there are three longest paths \(P_u \in \LBB{u}{v}{w}\), 
\(P_v \in \LBB{v}{u}{w}\), and \(P_w \in \LBB{w}{u}{v}\). Thus, there exist three distinct components \(C_{uv}\), \(C_{uw}\), and~\(C_{vw}\) generated 
by the virtual edges of \(\D\) such that, for every $x$, $y$ in $\D$, all intersection points of \(P_x\) and \(P_y\) lie in the component \(C_{xy}\).  
Without loss of generality, let \(|P_u[C_{uv}]| \geq L / 2\). Then, by combining the paths \(P_u[C_{uv}]\), \(P_u[C_{uw}]\con{u} P_w\), and a longer 
tail of~\(P_w\), we obtain a path of length strictly greater than~$L$, a contradiction. So, there exists a virtual Gallai edge in~\(\D\).

If all edges of \(\D\) are virtual Gallai edges, then $\LBB{u}vw = \LBB{v}uw = \LBB{w}uv = \varnothing$. If moreover at least one among $\LB{u}vw$,
$\LB{u}wv$, $\LB{v}wu$ is empty, then one of the vertices in $\D$ is a Gallai vertex. Otherwise, we are in the situation of Lemma~\ref{lem:triang3}
and the statement of the lemma follows immediately.

Without loss of generality, we may assume \(\{u,v\}\) is a virtual Gallai edge. Hence, \({\LBB{w}uv = \varnothing}\). Let \(P_u \in \LBB{u}vw \cup
\LBB{v}uw\) and \(x \in \{u,v\}\) be such that~\(P_u\) has a tail starting at~\(x\) that is as long as possible. Without loss of generality, we may
assume \(x=u\) and thus \(P_u \in \LBB{u}vw\). Let~\(P_u'\) be such a longer tail. (If both tails of~\(P_u\) starting at~\(u\) have same length,
choose~\(P_u'\) to be any one of them.) Let \(P_u''\) be the other tail of~\(P_u\). As \(\LBB{u}vw\) is nonempty, \(\{v,w\}\) is not a virtual Gallai
edge. If all longest paths contain~\(u\), then we are done since \(u\) is a Gallai vertex. Otherwise, \(\LeB{v}u\) is nonempty.

Each \(P_v \in \LeB{v}u\) intersects \(P_u'\) because otherwise, by combining \(P_u'\), \(P_u'' \con{u} P_v\), and a longer tail of~\(P_v\), we get a
path of length strictly greater than~$L$ by the choice of~\(P_u\). If \(\{u,v\}\) is the only virtual Gallai edge in $\D$, then \(\LBB{v}uw \neq
\varnothing\) and \(P_u'\) has to lie in a component \(C \in \calC_{\{u,v\}|w}\), so that it intersects every \(P_v \in \LBB{v}uw\). Otherwise,
\(\{u,w\}\) is also a virtual Gallai edge and \(P'_u\) lies in a component \(C\) either in \(\calC_{\{u,v\}|w}\) or in \(\calC_{\{u,w\}|v}\). (Note
that in this case \(\LB{v}wu \neq \varnothing\).) Without loss of generality, we assume \(C \in \calC_{\{u,v\}|w}\). We claim that~\(C\) has the
desired properties.

First, we prove that each path in \(\LeB{v}u\) intersects in \(C^\circ\) every path in \(\LeB{u}v \cup \Le{u}v\).  Let \(P_v \in \LeB{v}u\). Suppose
that there exists a path \(Q \in \LeB{u}v \cup \Le{u}v\) such that \(P_v\) does not intersect \(Q\) in~\(C^\circ\). Either~\(Q\) intersects \(P_u'\)
in \(C^\circ\), or \(P_u'\) and both tails of \(Q\) starting at~\(u\) have length \(L/2\) (see Figure~\ref{fig:P2Q1}).  In the former case, since
\(P_u'\) intersects both \(P_v\) and \(Q\) in \(C^\circ\), we can apply Lemma~\ref{lem:3paths2} if \(Q \in \LeB{u}v\) (with paths~\(P_v\) and~\(Q\)
and connecting path~\(P_u'\)) or Corollary~\ref{cor:3paths3} if \(Q \in \Le{u}v\) (with paths \(P_v\) and \(Q\), \(z=v\), a suitable tail of \(P_v\)
starting at \(v\), subpaths \(Q[C]\), and a connecting path contained in~\(P_u'\)) deriving a contradiction.  So, suppose now that~\(P_u'\) and both
tails of~\(Q\) starting at~\(u\) have equal length. The path \(P_v\) intersects both tails of~\(Q\) starting at~\(u\) because otherwise such a tail of
\(Q\), \(P_u'\con{u}P_v\), and a longer tail of \(P_v\) would be a path of length strictly greater than~$L$. Therefore and since the combination of
\(P_u'\) with one tail of~\(Q\) starting at~\(u\) and the combination of $P_u'$ with the other tail of $Q$ starting at $u$ are both longest paths, we
can apply Lemma~\ref{lem:3paths} with a connecting path contained in~\(P_v\) and \(z=u\), deriving again a contradiction.  Hence, each path in
\(\LeB{v}u\) intersects in \(C^\circ\) every path in \(\LeB{u}v \cup \Le{u}v\).

\begin{figure}[!htbp]
	\centering
		\includegraphics[scale=\scalefactor,page=1]{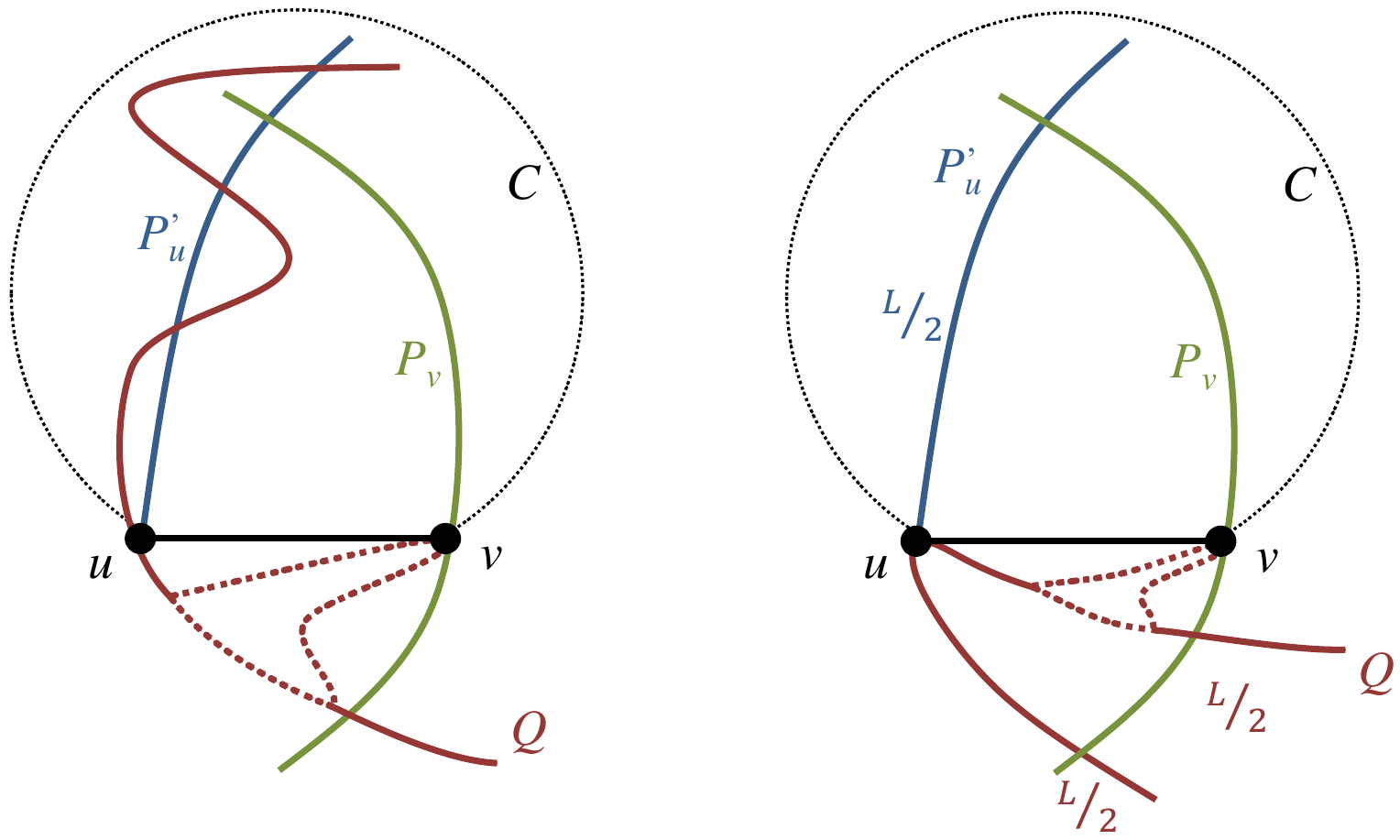}
	\caption{Left: \(P_v\) and \(Q\) do not intersect in \(C\) (apply either Lemma~\ref{lem:3paths2} or Corollary~\ref{cor:3paths3}); \\ 
                 Right: \(P_v\) intersects both tails of \(Q\) outside of \(C\) (apply Lemma~\ref{lem:3paths}).}
	\label{fig:P2Q1}
\end{figure}

Next, we prove that each path in \(\Le{u}v\) intersects in \(C^\circ\) every path in \(\LeB{u}v\).  If \(\Le{u}v \neq \varnothing\), let \(P\) be a path
in~\(\Le{u}v\) and \(Q_u\) in \(\LeB{u}v\). Assume that \(P\) does not intersect \(Q_u\) in~\(C^\circ\). 
Let \(P_v \in \LeB{v}u\) and note that such a longest path must exist. Since \(P_v\) intersects \(Q_u\) in \(C^\circ\) and \(P\) in
\(C^\circ\), we derive a contradiction longest paths \(Q_u\) and \(P\) with \(z=u\), a suitable tail of \(Q_u\) starting at $u$, subpaths~\(P[C]\),
and a connecting path contained in \(P_v[C]\).
\end{proof2}


\begin{lem}\label{lem:iter2}
  In a non-trivial connected series-parallel graph $G$, let \(e = \{u,v\}\) be a virtual Gallai edge and \({C \in \mathcal C_e}\) be a component such
    that all pairs of longest paths mutually intersect in at least one vertex of \(C\) and all pairs of longest paths in \(\LeB{u}v \times \LeB{v}u\),
    \(\LeB{u}v \times \Le{u}v\), and \(\LeB{v}u \times \Le{u}v\) mutually intersect in \(C^{\circ}\) (as in Lemma~\ref{lem:edge}). Let $w$ be the
    unique vertex in $C$ adjacent in $T(G)$ to both $u$ and $v$.  Then \(u\), \(v\), or \(w\) is a Gallai vertex, or there is a virtual edge \(f\)
    incident to \(u\) or \(v\) and a component \(C_1 \in \mathcal C_f\), \(C_1 \subsetneq C\), with the properties of Lemma~\ref{lem:edge}.
\end{lem}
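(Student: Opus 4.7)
The plan is to apply the strategies used in the proofs of Lemmas~\ref{lem:triang3} and~\ref{lem:edge} to the virtual triangle $\D=\{u,v,w\}$, which is automatically a virtual Gallai triangle since $\{u,v\}$ is already a virtual Gallai edge, and to use properties~(i) and~(ii) of $C$ to force the resulting refinement strictly inside $C$.

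First I would dispose of the easy case: if $w$ lies in every longest path, then $w$ is a Gallai vertex and we are done. Otherwise, there is a longest path not containing $w$, and since $\{u,v\}$ is Gallai this path must contain $u$ or $v$; note also that $\LBB{w}uv=\varnothing$ automatically. I would then split into cases on which of $\{u,w\}$ and $\{v,w\}$ are virtual Gallai edges. If both are Gallai, then $\LBB{u}vw=\LBB{v}uw=\varnothing$; either one of $\LB{u}vw$, $\LB{u}wv$, $\LB{v}wu$ is empty (so that one of $u,v,w$ is a Gallai vertex), or all three are nonempty and we are exactly in the setup of Lemma~\ref{lem:triang3}, which produces a component $C_1\in\mathcal{C}_{\{u',w'\}|v'}$ for some labeling $\{u',v',w'\}=\D$. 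If only $\{u,w\}$ is Gallai (so $\LBB{u}vw\neq\varnothing$ and $\LBB{v}uw=\varnothing$), the symmetric case of only $\{v,w\}$ being Gallai, or neither $\{u,w\}$ nor $\{v,w\}$ is Gallai (so both $\LBB{u}vw$ and $\LBB{v}uw$ are nonempty), I would mimic the second half of the proof of Lemma~\ref{lem:edge}: choose a longest path $P_u\in\LBB{u}vw\cup\LBB{v}uw$ whose tail starting at whichever of $u,v$ it contains is as long as possible, let $P_u'$ be this tail, and argue by the usual length combinations that $P_u'$ lies in a single component $C_1$ with corresponding generating edge $f$.

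The main obstacle, compared with the proofs of Lemmas~\ref{lem:triang3} and~\ref{lem:edge}, is showing that the resulting $C_1$ is strictly contained in $C$ and that $f$ is incident to $u$ or $v$. This is where properties~(i) and~(ii) of $C$ are used decisively. Because $\{u,v\}$ separates $C^\circ$ from $V(G)\setminus C$, any longest path in $\LeB{u}v\cup\LeB{v}u$ has its intersection with $C$ confined to a single tail starting at whichever of $u,v$ it contains, and property~(ii) forces this tail to enter $C^\circ$. In the Lemma~\ref{lem:edge}-like construction, by restricting the maximization to these in-$C$ tails and using property~(i) to rule out competing configurations that would place a longer tail outside $C$, one obtains that $P_u'$ is a tail of $P_u$ in $C$. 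Consequently, the component $C_1$ built from $P_u'$ lies in $\mathcal{C}_{\{u,w\}|v}\cup\mathcal{C}_{\{v,w\}|u}$, all of whose members are strictly contained in $C$ and generated by an edge incident to $u$ or $v$. An analogous argument in the Lemma~\ref{lem:triang3} case forces the labeling to satisfy $v'\neq w$, so that $C_1\in\mathcal{C}_{\{u,w\}|v}\cup\mathcal{C}_{\{v,w\}|u}$ there as well.

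Once $C_1$ and $f$ are identified, verifying that $C_1$ satisfies the three intersection properties required by Lemma~\ref{lem:edge} proceeds essentially as in that proof: for each pair $(P,P')$ of longest paths that supposedly fails to meet in $C_1^\circ$, the combination of $P_u'$ (or the corresponding tail from the Lemma~\ref{lem:triang3} case) with appropriate subpaths of $P$ and $P'$ produces, via Lemma~\ref{lem:3paths2}, Lemma~\ref{lem:3paths}, or Corollary~\ref{cor:3paths3}, a path strictly longer than $L$, the desired contradiction.
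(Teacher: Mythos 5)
Your overall strategy (treat $\{u,v,w\}$ as a Gallai triangle and re-run Lemmas~\ref{lem:triang3} and~\ref{lem:edge} to refine to a smaller component) is the natural first attempt, but it has a genuine gap exactly at the point you flag as ``the main obstacle,'' and the patch you sketch does not work. The constructions in Lemmas~\ref{lem:triang3} and~\ref{lem:edge} are driven by choosing a path whose tail at a triangle vertex is \emph{globally} as long as possible; every subsequent length comparison (``otherwise we get a path longer than $L$'') depends on that maximality. Nothing prevents this maximizing tail from being a tail of some path in $\LeB{u}v$ that leaves $C$ at $u$, in which case the component these lemmas hand you lies in $\calC_{\{u,v\}|w}$ and is \emph{not} contained in $C$. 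Your proposed fix --- restricting the maximization to the in-$C$ tails --- destroys precisely the property that makes the rerouting arguments produce paths longer than $L$: the in-$C$ tail of a path in $\LeB{u}v$ may well be its \emph{shorter} tail, so concatenations built from it need not exceed $L$, and property~(i) does not rule this out (a path can meet every other longest path inside $C$ while its longer tail lives outside $C$). Likewise, the assertion that ``an analogous argument forces $v'\neq w$'' in the Lemma~\ref{lem:triang3} case is exactly the hard content and is not supplied; the paper in fact does \emph{not} invoke Lemma~\ref{lem:triang3} here, but instead runs a fresh, rather long argument (the dichotomy that for each $P\in\LB{u}vw$ either $P^{(v)}$ meets every $P_u^{(w)}$ or $P^{(u)}$ meets every $P_v^{(w)}$, followed by a uniformity claim) precisely to manufacture a component in $\calC_{\{v,w\}|u}$ rather than accept whatever component the maximization would yield.

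The missing idea is the paper's opening move: since every pair in $\LeB{u}v\times\LeB{v}u$ meets in $C^\circ$ and $C\notin\calC_{\{u,v\}|w}$, Lemma~\ref{lem:SP} forces all paths of $\LeB{u}v$ or all paths of $\LeB{v}u$ to pass through $w$, because a $u$-tail and a $v$-tail avoiding $w$ can only meet in a component of $\calC_{\{u,v\}|w}$, i.e.\ outside $C$. This single observation yields $\LBB{u}vw=\varnothing$ (say), makes $\{v,w\}$ a virtual Gallai edge, collapses your case analysis (your case in which neither $\{u,w\}$ nor $\{v,w\}$ is Gallai is in fact vacuous under the hypotheses), and, most importantly, pins the new component inside $\calC_{\{v,w\}|u}\cup\calC_{\{u,w\}|v}$, whose members are automatically proper subsets of $C$. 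Without it, the containment $C_1\subsetneq C$ is not established.
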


\begin{proof}
Let $\Delta = \{u,v,w\}$.  Assume neither $u$ nor $v$ are Gallai vertices (otherwise there is nothing more to prove).  Thus, both \(\LeB{u}v\) and
\(\LeB{v}u\) are nonempty.  All pairs of paths in \(\LeB{u}v \times \LeB{v}u\) intersect in \(C^\circ\) by the assumptions of the lemma. By
Lemma~\ref{lem:SP}, all paths in \(\LeB{u}v\) or all paths in \(\LeB{v}u\) must contain \(w\) since $C \notin \calC_{\{u,v\}|w}$. Without loss of
generality we may assume that all paths in $\LeB{u}v$ contain $w$. Therefore, \(\LBB{u}vw = \varnothing\) and \(\{v,w\}\) is a virtual Gallai edge.

We distinguish two cases. First, we consider the case in which there exists a path in \(\LeB{v}u\) that does not contain \(w\) (that is, a path in
$\LBB{v}uw$) and then the case in which all paths in \(\LeB{v}u\) contain \(w\) (that is, $\LBB{v}uw = \varnothing$). In both cases we show that either
there exists a Gallai vertex, or a virtual Gallai edge and a component strictly smaller than~\(C\) that fulfill the requirements of
Lemma~\ref{lem:iter2}.

\begin{figure}[!htbp]
	\centering
		\includegraphics[scale=\scalefactor,page=1]{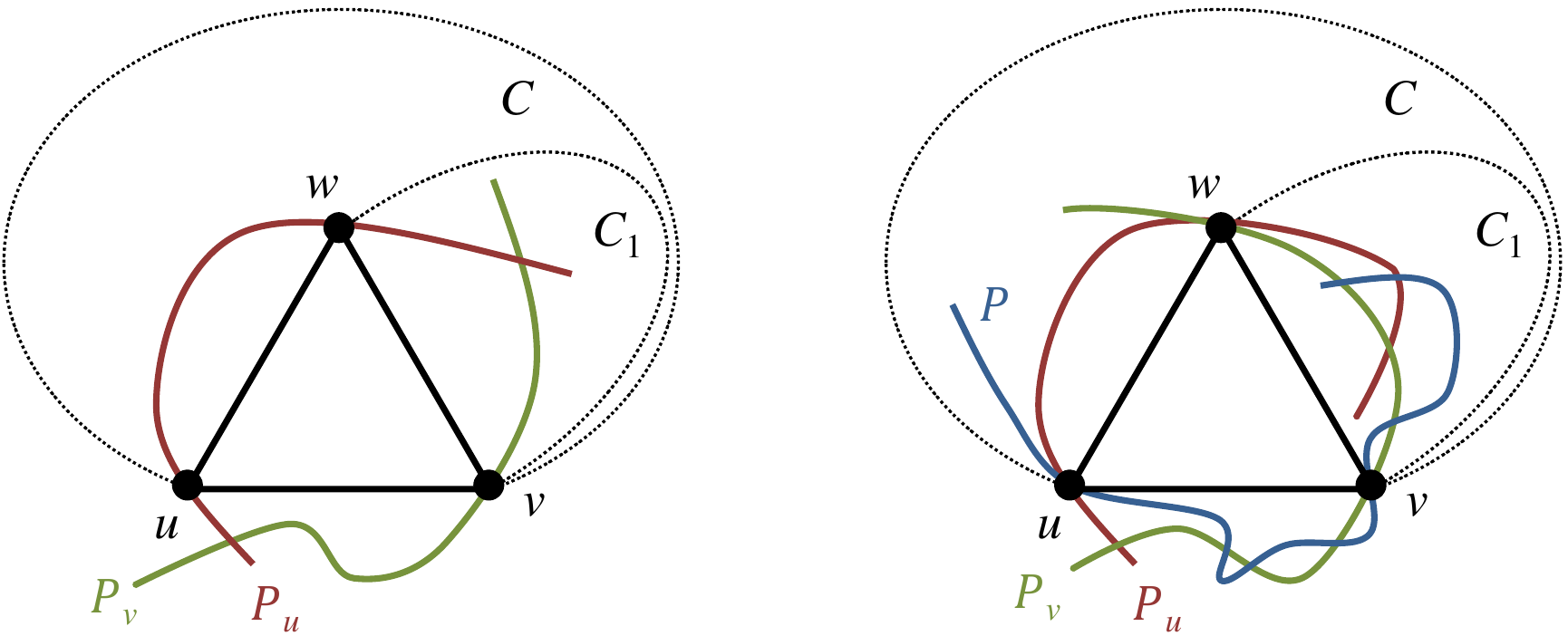}
	\caption{The scenario of the proof of Lemma~\ref{lem:iter2}: Case 1 (left) and Case 2 (right).}
	\label{fig:indLemma}
\end{figure}

\textit{Case 1. The set \(\LBB{v}uw\) is nonemtpy.}

For every path \(P_u \in \LeB{u}v = \LB{u}wv\), the tail \(P_u^{(w)}\) must intersect every path in \(\LBB{v}uw\) by assumption. Let \(C_1 \subsetneq C\),
\(C_1 \in \mathcal C_{\{v,w\}|u}\) be the unique component where they mutually intersect.

We claim that the virtual edge \(\{v,w\}\) together with the component \(C_1\) fulfills the requirements of Lemma~\ref{lem:iter2}.

If \(\LB{u}vw \neq \varnothing\), let \(P \in \LB{u}vw\) and \(P_v\in \LBB{v}uw\).  Assume for a contradiction that there exists a path \(P_u \in
\LeB{u}v\) such that \(P\) does not intersect \(P_u\) in \(C_1^\circ\). Note that \(P_v\) and \(P\) must intersect in \(C^\circ\) by assumption and
hence \(P_u\) and \(P^{(v)}\) are disjoint. Since \(P_v\) intersects \(P_u\) in \(C_1^\circ\) and \(P\) in \(C_u\) (at least in vertex \(v\)), we can
apply Lemma~\ref{lem:3paths2} (with paths \(P_u\) and \(P\), tails \(P_u^{(w)}\) and \(P^{(v)}\), and a connecting path contained in \(P_v[C_1]\)) to
derive a contradiction.

Next, we prove that every path in \(\LB{v}wu \cup \Luvw{u}vw\) intersects every path in \(\LBB{v}uw \cup \LeB{u}v \cup \LB{u}vw\) in~\(C_1^\circ\).

Let \(P \in \LB{v}wu \cup \Luvw{u}vw\) and \(P_v \in \LBB{v}uw\). Note that \(P_v\) intersects \(P\) in \(C^\circ\) by assumption if \(P \in
\Luvw{u}vw\). Otherwise, \(P \in \LB{v}wu\), and they also intersect in \(C^\circ\), or not both \(P_v\) and \(P\) could be longest paths by
Lemma~\ref{lem:3paths} for \(z=v\), a suitable tail of \(P_v[C_1]\), tail \(P^{(v,w)} \cup P^{(w)}\), and connecting path~\(P_u^{(w)}\) for some \(P_u
\in \LeB{u}v\).  Furthermore, \(P\) must intersect \(P_v\) in \(C_1^\circ\). Otherwise, \(P_v\) would have a tail starting at \(v\) completely in a
component \(C' \in \mathcal C_{\{v,w\}|u}\), \(C' \neq C_1\).  Then \(|P_v[C']| < L/2\) since~\(P_v[C']\) is disjoint from \(P_u\) and so \(P_v[C']
\cup P_v[C_1] \con{v} P_u\), and a longer tail of \(P_u\) would be a path of length strictly greater than~$L$. But now, by combining \(P_v[C_1]\) and
a longer tail of \(P\) starting at \(v\), we get a path of length strictly greater than~$L$, a contradiction. For every \(P_u \in \LeB{u}v\), by
applying Corollary~\ref{cor:3paths3} (with paths~\(P_u\) and~\(P\), \(z=w\), tail~\(P_u^{(w)}\), subpaths~\(P[C_1]\), and connecting
path~\(P_v[C_1]\)), we can deduce that \(P\) intersects~\(P_u\) in~\(C_1^\circ\). Every \(P_w \in \LB{u}vw\) must intersect~\(P\) in~\(C_1^\circ\),
otherwise we get a contradiction by applying Corollary~\ref{cor:3paths3} (with paths \(P\) and~\(P_w\), \(z=v\), tail~\(P_w[C_1]\),
subpaths~\(P[C_1]\), and connecting path~\(P_u^{(w)}\)).

\textit{Case 2.  The set \(\LBB{v}uw\) is empty.}

If the set \(\LB{u}vw\) is empty, then all longest paths contain \(w\), therefore \(w\) is a Gallai vertex and the requirements of
Lemma~\ref{lem:iter2} are fulfilled. So, from now on, we assume that the set \(\LB{u}vw\) is nonempty.

First, we prove that, for each \(P \in \LB{u}vw\), either \(P^{(v)}\) intersects \(P_u^{(w)}\) for every \(P_u \in \LeB{u}v = \LB{u}wv\), or
\(P^{(u)}\) intersects \(P_v^{(w)}\) for every \(P_v \in \LeB{v}u = \LB{v}wu\). Assume for a contradiction that there exist \(P \in \LB{u}vw\), \(P_u
\in \LeB{u}v\), and \(P_v \in \LeB{v}u\) such that \(P^{(u)}\) does not intersect \(P_v^{(w)}\) and \(P^{(v)}\) does not intersect~\(P_u^{(w)}\). By
the assumptions of the lemma, \(P\) has to intersect both~\(P_u\) and \(P_v\) in \(C^\circ\). Therefore, \(P\) intersects \(P_u\) in the interior of
some component of~\(\mathcal C_{\{u,w\}|v}\) and~\(P_v\) in the interior of some component of~\(\mathcal C_{\{v,w\}|u}\). (See Figure~\ref{fig:indLemma2}.)  
First, suppose that \(P_u^{(u)}\) and \(P_v^{(v)}\) do not intersect. By combining \(P^{(v)}\), \(P^{(v,u)} \con{v} P_u\), and a longer tail of
\(P_u\), we get a path that cannot be of length strictly greater than~$L$, hence \(|P^{(v)}| < L/2\). Combining \(P^{(u)} \cup P^{(u,v)} \cup
P_v^{(v,w)} \cup P_v^{(w)}\) would be a path of length strictly greater than~$L$ if \(|P_v^{(v)}| \leq L/2\). However, by combining \(P_v^{(v)}\),
\(P_v^{(v,w)} \con{v} P_u\), and a longer tail of \(P_u\), we get a path of length strictly greater than~$L$, a contradiction. If, on the other
hand, \(P_u^{(u)}\) and \(P_v^{(v)}\) intersect, then \(P^{(u)}\) and \(P_u^{(u)}\) are disjoint except for the vertex \(u\). Note that \(|P^{(u)}|
= |P_u^{(u,w)} \cup P_u^{(w)}|\) since otherwise $P$ and $P_u$ would not be longest paths. The combination of \(P_u^{(u)}\) and \(P^{(u)}\) is
therefore a longest path in \(\Luvw{u}vw\), which is a contradiction. Therefore, \(P^{(u)}\) intersects~\(P_v^{(w)}\), or~\(P^{(v)}\) intersects
\(P_u^{(w)}\).

\begin{figure}[!htbp]
	\centering
		\includegraphics[scale=\scalefactor,page=1]{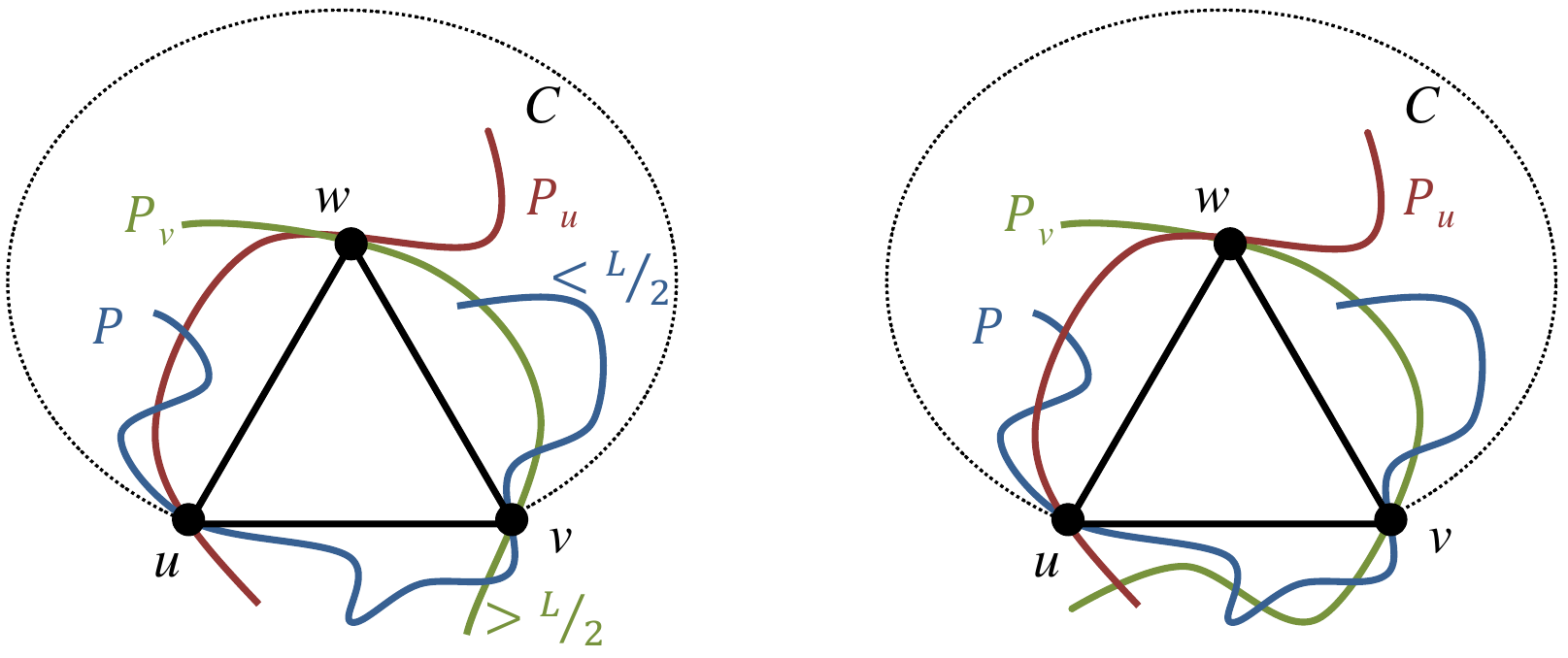}
	\caption{Left: \(P_u^{(u)}\) and \(P_v^{(v)}\) do not intersect; Right: \(P_u^{(u)}\) and \(P_v^{(v)}\) intersect.}
	\label{fig:indLemma2}
\end{figure}

We claim that if \(R^{(u)}\), for some longest path \(R\) in \(\LB{u}vw\), does not intersect \(P_v^{(w)}\) for some path \(P_v \in \LeB{v}u\), then
for every longest path \(P \in \LB{u}vw\), the tail \(P^{(v)}\) intersects \(P_u^{(w)}\) for every path \({P_u \in \LeB{u}v}\). Indeed, let \(P\) be a path
in~\(\LB{u}vw\). Assume for a contradiction that there exists a path \(P_u \in \LeB{u}v\) such that \(P^{(v)}\) does not intersect~\(P_u^{(w)}\). Note
that by the latter paragraph \(P_u^{(w)}\) must intersect \(R^{(v)}\) and~\(P_v^{(w)}\) must intersect~\(P^{(u)}\). If \(P^{(v)}\) lies in some
component of \(\mathcal C_{\{v,w\}|u}\), then by Lemma~\ref{lem:SP} $P^{(v)}$ cannot intersect $R^{(u)}$ and $R^{(u,v)}-v$, and $R^{(v)}$ cannot
intersect $P^{(u)}$ and $P^{(u,v)}-v$ since $R^{(v)}$ also lies in a component of \(\mathcal C_{\{v,w\}|u}\). Hence, we have \(|P^{(v)}| = |R^{(v)}|\)
and \(Q = R^{(u)} \cup R^{(u,v)}\cup P^{(v)}\) is a path in \(\LB{u}vw\) such that \(Q^{(u)}\) does not intersect \(P_v^{(w)}\), and \(Q^{(v)}\) does
not intersect~\(P_u^{(w)}\), a contradiction. Analogously, if \(R^{(u)}\) lies in a component of \(\mathcal C_{\{u,w\}|v}\), the path \(R^{(u)} \cup
P^{(u,v)}\cup P^{(v)}\) yields a contradiction. Thus, we may assume that both \(R^{(u)}\) and \(P^{(u)}\) lie in a component of~\(C_{\{u,v\}|w}\) and
are therefore disjoint from \(P^{(u)}\) except for \(u\), and from \(R^{(v)}\) except for~\(v\), respectively. Now, we get a contradiction from
Lemma~\ref{lem:3paths2} for longest paths \(R\) and \(P\), tails~\(R^{(v)}\) and~\(P^{(u)}\), and a connecting path contained in \((P_u^{(w)}\con{w}R)
\cup (P_v^{(w)}\con{w}P)\).

Without loss of generality, we may assume that, for each \(P \in \LB{u}vw\), the tail \(P^{(v)}\) intersects \(P_u^{(w)}\) for every \(P_u \in \LeB{u}v\).
Let \(C_1 \in \mathcal C_{\{v,w\}|u}\) be the unique component where they mutually intersect. Note that the virtual edge \(\{v,w\}\) is indeed a
virtual Gallai edge since the set \(\LBB{u}vw\) is empty. Let \(P \in \LB{u}vw\) and \(P_u \in \LeB{u}v\) be arbitrary but fixed. Note that \(P_u\)
intersects \(P\) in \(C_1^\circ\). Assume for a contradiction that there exists a longest path \(P_v \in \LB{v}wu \cup \Luvw{u}vw\) such that \(P\)
and \(P_v\) do not intersect each other in \(C_1^{\circ}\).  Then not both \(P\) and \(P_v\) can be longest paths by Corollary~\ref{cor:3paths3} 
for~\(z=v\), tail \(P^{(v)}\), subpaths \(P_v[C_1]\), and connecting path \(P_u^{(w)}\), a contradiction. Therefore, the virtual edge \(\{v,w\}\) 
together with the component \(C_1\) fulfills the requirements of Lemma~\ref{lem:iter2}.
\end{proof}


\begin{thm}
For every connected series-parallel graph \(G\), there exists a vertex \(v\) such that all longest paths in \(G\) contain~\(v\). 
\end{thm}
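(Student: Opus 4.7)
The plan is to derive the theorem by combining the three main lemmas of Section~\ref{sec:series-parallel} via a straightforward iterative reduction on components. The heavy lifting has already been done in Lemmas~\ref{lem:triang}, \ref{lem:edge}, and~\ref{lem:iter2}; what remains is only to orchestrate them and to argue termination using the finiteness of $G$.

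First, I would handle the base case. If $G$ is trivial, i.e., a single vertex or a single edge, then every vertex of $G$ is trivially a Gallai vertex. So from now on, assume $G$ is a non-trivial connected series-parallel graph. Then Lemma~\ref{lem:edge} applies, producing either a Gallai vertex (in which case we are done immediately) or a virtual Gallai edge $e_0 = \{u_0, v_0\}$ together with a component $C_0 \in \mathcal{C}_{e_0}$ such that all pairs of longest paths mutually intersect in $C_0$ (since two longest paths always share a vertex by Proposition~\ref{prop:ore}, and the intersection must lie in $C_0$ by the properties guaranteed by Lemma~\ref{lem:edge} together with the membership of $\{u_0,v_0\}$ in every longest path).

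Next I would iterate Lemma~\ref{lem:iter2}. Given the pair $(e_0, C_0)$, the lemma outputs either a Gallai vertex among $u_0, v_0, w_0$ (where $w_0$ is the common $T(G)$-neighbor of $u_0$ and $v_0$ inside $C_0$), or a new virtual Gallai edge $e_1$ incident to $u_0$ or $v_0$ together with a component $C_1 \in \mathcal{C}_{e_1}$ with $C_1 \subsetneq C_0$ enjoying the same hypothesis as in Lemma~\ref{lem:edge}. Iterating this step produces a sequence $(e_0, C_0), (e_1, C_1), (e_2, C_2), \ldots$ with $C_0 \supsetneq C_1 \supsetneq C_2 \supsetneq \cdots$. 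Because $G$ is finite, this strictly decreasing chain of components must terminate after finitely many steps, and termination can only happen when Lemma~\ref{lem:iter2} outputs a Gallai vertex. That vertex $v$ is then contained in every longest path of $G$.

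The argument requires essentially no new ideas beyond what is already in the preceding lemmas; the only subtlety is to verify that the hypotheses of Lemma~\ref{lem:iter2} really are preserved from one iteration to the next (which is precisely the content of the ``with the properties of Lemma~\ref{lem:edge}'' clause in its conclusion), so that the iteration can be carried out without interruption. No separate obstacle stands between us and the theorem, which reduces to a short formal write-up of this induction on $|V(C_i)|$.
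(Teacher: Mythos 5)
Your proposal is correct and is essentially identical to the paper's own (one-line) proof: invoke Lemma~\ref{lem:edge} to obtain either a Gallai vertex or a virtual Gallai edge with a suitable component, then iterate Lemma~\ref{lem:iter2} and use finiteness of \(G\) to force termination at a Gallai vertex. Your additional remarks on the trivial case and on why all pairs of longest paths meet inside the initial component are accurate but already covered by the statements of those lemmas.
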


\begin{proof}
This follows from Lemma~\ref{lem:edge} and by iteratively applying Lemma~\ref{lem:iter2} since \(G\) is finite. 
\end{proof}

\section{Algorithmic remarks}
\label{sec:algo}

For any hereditary class of graphs for which there is a polynomial-time algorithm that computes (the length of) a longest path, it is easy
to derive a polynomial-time algorithm that finds all Gallai vertices. Indeed, one just has to compute the length $L$ of a longest path in
the given (connected) graph~$G$, and then to check, for each vertex $v$, whether the length of a longest path in $G-v$ remains the same. If
not, $v$ is a Gallai vertex.

It is a well-known result that one can use dynamic programming to solve many combinatorial problems on graphs of bounded treewidth in
polynomial or even linear time~\cite{arnborg,bodlaender}. In particular, Bodlaender~\cite[Thm. 2.2]{Bodlaender93} claims a linear-time
algorithm following these lines to find a longest path in a graph with bounded treewidth. (See also~\cite{Bodlaender96} on how to obtain in
linear time a tree decomposition for graphs with bounded treewidth.) Therefore, using the idea described in the previous paragraph, one can
find all Gallai vertices in time quadratic on the number of vertices of the given connected series-parallel graph.

In fact, one can do better by applying the same strategy used to compute the length of a longest path in a partial $k$-tree, but carrying
more information during the process. Given a connected graph $G$ of treewidth $k$, compute in linear time a (nice) tree decomposition
for~$G$ (as done in~\cite{DiazST02} for instance). Then run a dynamic programming algorithm on top of this tree decomposition, to compute
the length $L$ of a longest path in $G$. Roughly speaking, this algorithm computes the length of longest parts of paths within the subgraph
induced by the vertices in clusters already traversed of the tree decomposition, and puts together this information while going through the
tree decomposition. Specifically, when visiting a node $u$ of the tree decomposition, if $H_u$ is the subgraph of $G$ induced by the
vertices in the cluster $X_u$ or clusters of nodes below $u$, for each different way that a path can behave in the cluster~$X_u$ and
in~$H_u$, we have a configuration as the ones described in Figure~\ref{fig:configurations} for the case in which $X_u$ has three vertices.
\begin{figure}[htbp]
  \centering
  \includegraphics[scale=\scalefactor]{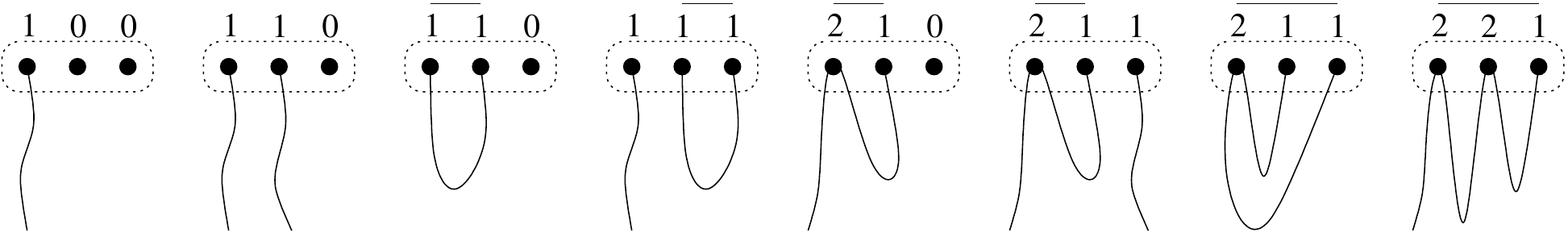}
  \caption{A sample of the configurations for a cluster with three vertices. The whole set of configurations has to consider the labels of the vertices in the cluster.}
  \label{fig:configurations}
\end{figure}

The number of such configurations depends only on the treewidth. For each such configuration, the algorithm computes the length of a longest
part of a path in~$H_u$ that ``agrees'' with that configuration. It does this using dynamic programming, that is, it computes such length
for a node $u$ and one of the configurations using the information that it already computed for the children of $u$ in the tree. Some of the
configurations of the children, together with new edges within $X_u$, combine into each configuration for $u$. The combinations that give
raise to the longest parts are the ones of interest, and give the length of a longest part for that configuration for $u$.

In a first traversal of the tree decomposition, the value of~$L$ is computed. Now, as it is usual in dynamic programming, in a reverse
traversal of the tree, retracing backwards what was done to find out~$L$, one can mark, for each node and each configuration, if that
configuration at that node gives raise to a path of length~$L$ in~$G$. Once this is done for a node, the algorithm checks whether the
configurations for that node that give raise to a longest path all contain one of the vertices in the cluster of that node. If so, this is a
Gallai vertex. Otherwise the algorithm proceeds to the next node in the reverse traversal.

This process finishes with a Gallai vertex as long as the graph has one such vertex. In particular, for partial 2-trees, this process will
find a Gallai vertex in the reverse traversal as soon as it reaches the first cluster of the tree that contains a Gallai vertex. By
proceeding with the reverse traversal in this way, one can find all Gallai vertices. For bounded $k$, the running time of this algorithm is
linear on the number of vertices of the graph. (Note that the number of edges in a partial $k$-tree is at most $kn$, where $n$ is the number
of vertices in the partial $k$-tree.) Indeed, first computing a (nice) tree decomposition can be done in linear time. Second, the number of
configurations depends only on $k$, and the processing of each node of the tree decomposition depends only on the number of configurations
(and on the size of the cluster, which is bounded by $k+1$ and thus also by the number of configurations). Therefore, for series-parallel
graphs, this algorithm finds a Gallai vertex in time that is linear on the number of vertices of the graph.

\section{Related results and open questions}
\label{sec:question}

There are several questions related to Gallai's original question that remain open. 
For instance, it was asked~\cite{Kensell11,Zamfirescu01} whether there is a vertex common to all longest paths in all 4-connected graphs. 
This problem is open so far, and even the more general question for \(k\)-connected graphs with larger $k$ has not been answered. There are 3-connected examples known for which Gallai's question has a negative answer~\cite{Grunbaum74}.

In~\cite{deRezendeFMW13}, where a proof that all 2-trees have nonempty intersection of all longest paths was presented, it was asked whether the same holds for $k$-trees with larger values of~\(k\). As far as we know, this also has not yet been answered. In the present paper, we have proven that all connected subgraphs of 2-trees have nonempty intersection of all longest paths. We observe that the same does not hold for all subgraphs of 3-trees. Indeed, the counterexample by Walther, Voss, and Zamfirescu in~\cite{Walther74,Zamfirescu76} is a connected spanning subgraph of a 3-tree (see Figure~\ref{fig:wzamf2}).

\begin{figure}[htbp]
  \centering
  \includegraphics[scale=\scalefactor]{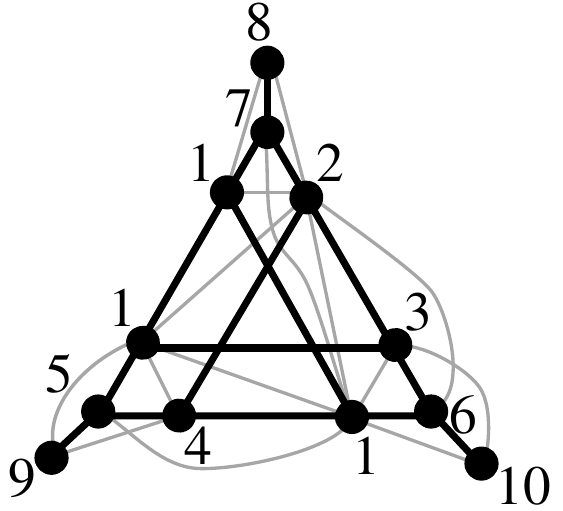}
  \caption{The counterexample of Walther, Voss, and Zamfirescu as a subgraph of a 3-tree. Missing edges are dotted. The number next to each vertex indicates the sequence in which they are added to the 3-tree.}
  \label{fig:wzamf2}
\end{figure}

In other words, Gallai's question has a positive answer for connected graphs with treewidth at most~$2$ (series-parallel graphs), but a negative answer for connected graphs with treewidth at most~$3$. 
As series-parallel graphs are the class of $K_4$ minor free graphs, one might also ask whether the answer is positive for all (connected) $K_5$ minor free graphs, but there are planar counterexamples known~\cite{Thomassen76}. 

As split graphs and 2-trees are chordal, a natural question raised by Balister et al.~\cite{BalisterGLS04} is whether all longest paths share a vertex in all chordal graphs. Recently, Michel Habib (personal communication) suggested that the answer to Gallai's question might be positive in co-comparability graphs. For this class of graphs, as well as for series-parallel graphs, there is a polynomial-time algorithm to compute a longest path~\cite{IoannidouN13}. (For chordal graphs, computing a longest path is NP-hard~\cite{Muller96}.) 

As already stated in Section~\ref{sec:intro}, instead of looking at the intersection of all longest paths, Zamfirescu asked whether any~\(p\) longest paths in an arbitrary connected graph contain a common vertex. This is certainly true for \(p=2\), proven to be false \cite{Schmitz75,Skupien96} for \(p\geq7\), but still open for \(p\) in \(\{3,4,5,6\}\).

\bibliographystyle{amsplain} 	 
\bibliography{longest}	 		 	 	 
\clearpage

\end{document}